\documentclass[11pt,a4paper]{article}
\usepackage[utf8]{inputenc}
\usepackage[T1]{fontenc}
\usepackage{lmodern}
\usepackage{amsmath,amssymb,amsthm,mathtools}
\usepackage[margin=3.5cm]{geometry}
\usepackage{xcolor}
\usepackage{hyperref}

\newtheorem{theorem}{Theorem}[section]
\newtheorem{proposition}[theorem]{Proposition}
\newtheorem{lemma}[theorem]{Lemma}
\newtheorem{corollary}[theorem]{Corollary}
\theoremstyle{definition}
\newtheorem{definition}[theorem]{Definition}
\theoremstyle{remark}
\newtheorem{remark}[theorem]{Remark}

\newcommand{\Lc}{\mathring{\mathcal L}}
\newcommand{\Ric}{\operatorname{Ric}}
\newcommand{\tr}{\operatorname{tr}}
\newcommand{\dv}{\operatorname{div}}
\newcommand{\Rm}{\mathring{R}}
\newcommand{\supp}{\operatorname{supp}}
\newcommand{\nv}[1]{{#1}}
\newcommand{\Hs}{\mathcal H}

\title{Compactly supported instability of Riemannian Schwarzschild–Tangherlini metrics}
\author{Erwann Delay}
\date{}

\begin{document}
\maketitle

\begin{abstract}
On the Riemannian Schwarzschild--Tangherlini metric of dimension $n\ge4$, we show that the
destabilizing TT tensor of Ozuch lies in the image of the ``TT-tensor machine''. By continuity,
it can then be approximated, in suitable weighted Sobolev spaces, by smooth compactly supported
TT tensors, on which the stability quadratic form is still negative. This proves instability in
the usual sense for noncompact manifolds. To our knowledge, no exact smooth compactly supported
destabilizing TT tensor was known before, even in 4D.
\end{abstract}

\tableofcontents

%====================================================================
\section{Introduction}
%====================================================================

Let $(M,g)$ be a complete Ricci-flat manifold, let $\Delta_L=\nabla^*\nabla -2\Rm$ be the
Lichnerowicz Laplacian and let
$$\mathcal Q(u)=\int_M\langle \Delta_L u,u\rangle\,d\mu_g$$ be the stability quadratic
form (up to a factor, the second variation of the total scalar curvature on TT tensors). On a
compact manifold, $g$ is \emph{linearly stable} if $\mathcal Q\ge0$ on TT tensors, and
\emph{unstable} otherwise. On a noncompact manifold, the usual definition (see for instance Dahl
and Kröncke \cite{DK24}) only involves compactly supported tensors:
\begin{definition}\label{def:usual}
The metric $g$ is \emph{stable} if $\mathcal Q(u)\ge0$ for every smooth, compactly supported TT
tensor $u$, and \emph{unstable} otherwise, that is, if there exists a smooth, compactly supported
TT tensor $u$ with $\mathcal Q(u)<0$.
\end{definition}
This definition is the natural one from the
variational point of view, since compactly supported deformations do not change the
geometry of the end.

The recent papers \cite{BiqOz26,Del26,Ozu26} use a geometrically natural but weaker
notion: $g$ is unstable if there exists a smooth TT tensor $u$, not necessarily compactly
supported but with appropriate asymptotic behavior (see \cite[Remark~13]{BiqOz26} and
\cite[Remark~3.1]{Del26}), such that $\mathcal Q(u)$ is finite and negative. The two notions are
not obviously equivalent. Indeed, if $u$ is TT with $\mathcal Q(u)<0$, the natural candidates
$\eta_Ru$, where $\eta_R$ is a cut-off function, are no longer divergence-free, and projecting
them back onto TT tensors requires solving an elliptic equation on $M$, whose solution has, in
general, no reason to be compactly supported. Passing from this geometric notion to
Definition~\ref{def:usual} therefore amounts to a density statement for compactly supported TT
tensors in a weighted energy space on which $\mathcal Q$ is continuous, and such a statement is
not available in general.

\nv{Another possible route is given by localized gluing techniques of Corvino type
\cite{Cor00,CD03,Del12}: one could try to correct $\dv(\eta_Ru)$ by a trace-free tensor $v$
supported in the annulus $A_R=\{R\le r\le2R\}$, solving the underdetermined equation
$\dv v=-\dv(\eta_Ru)$. This equation can be solved modulo a finite-dimensional cokernel, made of
the conformal Killing fields of the annulus. The Killing fields of $M$ (here $\partial_\tau$ and the
rotations) create no obstruction since $u$ is TT. However, the end is asymptotic to the flat
$S^1\times\mathbb R^N$, so that the annulus carries approximate conformal Killing fields (coming
for instance from the translations and dilations of $\mathbb R^N$). Their contribution, as well as
the norm of the correction in the energy space, would have to be controlled as $R\to\infty$; we do
not pursue this here. The approach followed in this paper avoids these issues: it involves no
obstruction and produces an exact, explicit formula for the compactly supported TT tensor.}

The Riemannian Schwarzschild metric is known to be linearly unstable: Gross, Perry and Yaffe
exhibited numerically a negative mode of the Lichnerowicz Laplacian on TT tensors \cite{GPY82}.
This eigentensor lies in $L^2$ and decays exponentially, but it is \emph{not} compactly supported
either, so it is not by itself an unstable direction in the sense of Definition~\ref{def:usual};
the same cut-off problem arises. In every dimension $n\ge4$, the Riemannian
Schwarzschild--Tangherlini metrics are unstable as well (see \cite{BiqOz26,Del26} for small
dimensions and \cite{Ozu26} for all dimensions\nv{; see also \cite{Pre00,Rea01} for Euclidean
negative modes in higher dimensions and their relation with the Gregory--Laflamme instability
\cite{GL93}}). More precisely, Ozuch \cite{Ozu26} gave an
explicit TT tensor $k$, built from the Killing field $\partial_\tau$, such that $\mathcal Q(k)$ is
finite and negative. This tensor decays only polynomially, it is not compactly supported, and
multiplying it by a cut-off function destroys the TT property.

The main result of this article is the following.
\begin{theorem}
For every $n\ge4$, there exists an \emph{exact, smooth, compactly supported} TT tensor with
negative $\mathcal Q$ on the Riemannian Schwarzschild--Tangherlini metric (see
Theorem~\ref{thm:main}). Hence these metrics are unstable in the usual sense of
Definition~\ref{def:usual}.
\end{theorem}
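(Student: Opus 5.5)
The strategy follows the abstract. We write Ozuch's destabilizing tensor $k$ as the output of a ``TT-tensor machine'', that is, a linear differential operator $\mathcal T$ with $\dv\mathcal T(\phi)=0$ and $\tr\mathcal T(\phi)=0$ identically, for every potential $\phi$ in some class. We then approximate the potential by compactly supported ones. The plan has four steps.

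\textbf{Step 1: building $\mathcal T$.} The metric is $g=V\,d\tau^2+V^{-1}dr^2+r^2 g_{S^{n-2}}$ with $V=1-(r_0/r)^{n-3}$ and Killing field $\partial_\tau$. We look for $\mathcal T$ as a double-divergence-type expression, in the spirit of the classical construction $u=\dv\dv$ of a tensor with the symmetries of the Riemann tensor, which is automatically divergence-free on Ricci-flat manifolds up to curvature terms. Alternatively, we use the $\partial_\tau$-invariant reduction, in which TT tensors are parametrized by potentials on the quotient. The guiding principle is that $\dv\circ\mathcal T\equiv0$ should follow from an operator identity (a commutation using $\Ric=0$ and the Killing equation), not from any property of $\phi$. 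This is what makes $\mathcal T(\phi)$ compactly supported whenever $\phi$ is.

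\textbf{Step 2: $k$ lies in the image.} We exhibit an explicit, smooth potential $\phi_0$, depending on $r$ and perhaps $\tau$, with $\mathcal T(\phi_0)=k$. Writing Ozuch's $k$ in the coordinates above, we match components and solve the resulting radial ODEs. We must check smoothness at the bolt $r=r_0$ and record the growth rate of $\phi_0$ as $r\to\infty$.

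\textbf{Step 3: approximation.} Set $\phi_R=\eta_R\phi_0$, with $\eta_R$ a cutoff equal to $1$ on $r\le R$ and supported in $r\le 2R$. Then $u_R=\mathcal T(\phi_R)$ is smooth, compactly supported and exactly TT. We introduce a weighted Sobolev norm $\|\cdot\|_E$ on the potential side, with weights adapted to the $S^1\times\mathbb R^{n-1}$ end, chosen so that:
\begin{enumerate}
\item the map $\phi\mapsto\mathcal Q(\mathcal T\phi)$ is continuous for $\|\cdot\|_E$, which amounts to controlling $\int|\nabla u|^2+|\Rm||u|^2$;
\item $\|\phi_R-\phi_0\|_E\to0$.
\end{enumerate}
Since $\mathcal Q(k)<0$, it then follows that $\mathcal Q(u_R)<0$ for $R$ large, which proves the statement.

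\textbf{Main obstacle.} The hard part is the interplay between Steps 2 and 3. The potential $\phi_0$ grows at infinity, since $\mathcal T$ differentiates and $k$ decays only polynomially. The error terms $\mathcal T(\phi_R)-k$ are supported in the annulus $A_R$ and involve derivatives of $\eta_R$, of size $R^{-j}$, multiplied by lower derivatives of $\phi_0$. Their energy on $A_R$ scales like $R^{n-1}\cdot R^{-2j}\cdot|\nabla^{m-j}\phi_0|^2$. We must verify that this tends to zero; if it does not, the choice of $\mathcal T$ has to be revisited.

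Two fixes are available if it fails in dimension $n=4$. The first is to replace the plain cutoff by a logarithmic cutoff $\eta_R(\log r/\log R)$. The second is to exploit the gauge freedom $\phi_0\mapsto\phi_0+\ker\mathcal T$ to lower the growth of $\phi_0$. A fallback is to compute $\mathcal Q(u_R)$ directly: integrate by parts, use that $\mathcal Q(k)$ is a finite negative constant, and show that the cross terms on $A_R$ are $o(1)$ using the explicit asymptotics of $k$.
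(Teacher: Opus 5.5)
Your architecture is the paper's: write $k$ as the image of an exactly TT-valued operator, cut off the potential, and pass to the limit in a weighted energy space. Your Step~3 is essentially Sections~\ref{sec:sobolev}--\ref{sec:main}. Once you know $|\nabla^i\phi_0|\lesssim r^{a-i}$, your annulus count gives exactly the paper's condition $a<\delta_0+4=\frac{11-n}2$, where $\delta_0=\frac{3-n}2$ is the weight of the energy space. However, the two steps that carry the actual content are left as promises, and the first one, as you describe it, fails.

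\textbf{Step 1.} A double divergence $\nabla^a\nabla^bW_{iajb}$ of a tensor with Riemann symmetries is symmetric and divergence-free only up to commutator terms. Commuting the covariant derivatives produces contractions of the \emph{full} curvature tensor with $\nabla W$ and $W$, not only Ricci contractions; for instance $[\nabla^i,\nabla^a]$ acting on $\nabla^bW_{iajb}$ produces a term $R^{ia}{}_{j}{}^{p}\nabla^bW_{iapb}$. These terms do not vanish on the Schwarzschild--Tangherlini metric. So $\mathcal T(\eta_R\phi_0)$ would not be exactly TT, which is the one property the cut-off argument cannot afford to lose. Your alternative, the ``$\partial_\tau$-invariant reduction'', is not specified at all.

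What is missing is an operator that is TT-valued by an identity valid on every Einstein manifold. The paper uses the TT-tensor machine of \cite{DelTT},
\[
P=\Delta_L^2-\Lc\,Q\,\dv,\qquad Q=d^*d+\tfrac n{2(n-1)}dd^*,
\]
which maps trace-free tensors to TT tensors and annihilates $\operatorname{Im}\Lc$.

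\textbf{Step 2.} This is the heart of the paper, and your plan gives no mechanism for it. Matching components of a fourth-order operator with infinite-dimensional kernel gives a system whose gauge part is undetermined. Nothing in your plan guarantees a solution that is smooth at the bolt and grows more slowly than $r^{(11-n)/2}$, and that growth is exactly the input your Step~3 needs. The paper's ideas are the following.
\begin{enumerate}
\item It factors the inversion into two second-order problems, $\Delta_Lh=k+\Lc X$ and $\Delta_LH=h+\Lc\tilde X$.
\item It replaces $\Delta_L$ by $2D\Ric(g)$, using $\Delta_L=2D\Ric(g)+\frac12\mathcal L B$. This allows working modulo $\operatorname{Im}\mathcal L+C^\infty g$ with the static ansatz $2\alpha P_0+2\beta P_1$. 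The system then becomes triangular: a first-order constraint on $\alpha+\beta$, solved by quadrature, and a scalar second-order ODE with explicit homogeneous solutions $\frac1{r^p-1}$ and $\frac{r^{p+2}-1}{r^p-1}$, solved by variation of constants. The free constants are fixed by regularity at the bolt and by growth at infinity.
\item This only yields $P(H)=k+\Lc W_0$, so one must still show $W_0=0$. Applying $d^*\dv$ gives $d^*W_0=-\vartheta$ with $\vartheta$ radial and harmonic, hence constant by regularity at the bolt and zero by decay at infinity. Invariance under $U(1)\times O(n-1)$ and $\tau\mapsto-\tau$ then makes $W_0$ a radial closed and co-closed $1$-form $c\,(r^df)^{-1}dr$, and $c=0$ by smoothness at the bolt.
\end{enumerate}

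The resulting growth is $k=O(r^{3-n})$, $h=O(r^{5-n})$ and $H=O(r^{7-n})$ up to logarithms, with $r^2\ln r$ for $n=4,5$. This satisfies $a_H<\frac{11-n}2$ precisely because $n>3$. With this in hand, none of your fallback fixes (logarithmic cut-off, gauge adjustment, direct estimate of cross terms) is needed. Without it, your Step~3 has nothing to apply to.
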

In particular, these metrics are unstable not only in the geometric sense of \cite{Ozu26}.
Since neither the negative mode of \cite{GPY82} nor the tensor of \cite{Ozu26} is compactly
supported, to our knowledge no exact smooth compactly supported destabilizing TT tensor was
known before, even in 4D.

The construction is based on the ``TT-tensor machine'' \cite{DelTT}, which provides, on any
Einstein manifold, a fourth-order operator $P$ mapping trace-free symmetric tensors to TT
tensors. If one finds $H$ with $P(H)=k$, then $P(\eta_RH)$ is TT, smooth and compactly supported,
and it remains to check that it converges to $k$ in a topology for which the quadratic form is
continuous. This paper carries out this program explicitly.

More precisely, we construct explicit, smooth, trace-free symmetric $2$-tensors $h$ and $H$ such
that
\[
\Delta_L h=k+\Lc X,\qquad \Delta_L H=h+\Lc\tilde X,
\]
where $\Lc$ is the conformal Killing operator. The method replaces $\Delta_L$ by the linearized
Ricci operator, which is legitimate modulo $\operatorname{Im}\Lc$. This turns the problem into a
triangular system of radial ODEs: a first-order constraint, solved by quadrature, and a scalar
second-order ODE whose homogeneous solutions are known. We deduce that $P(H)=k$. Working in
weighted Sobolev spaces, we then show that, for $R$ large, the tensor $P(\eta_RH)$, where
$\eta_R$ is a radial cut-off function, is an exact, smooth, compactly supported TT tensor on
which the stability quadratic form is negative. This proves instability in the usual sense for
noncompact manifolds.

\paragraph{Plan.} Section~\ref{sec:setting} fixes the setting. Section~\ref{sec:method} explains
the method, and Sections~\ref{sec:h} and~\ref{sec:H} solve the two successive equations.
Section~\ref{sec:PH} proves that $P(H)=k$.
Section~\ref{sec:sobolev} introduces weighted Sobolev spaces and establishes the required
continuity and convergence properties, and Section~\ref{sec:main} proves the main theorem.
Section~\ref{sec:persp} discusses, at a formal level, an abstract existence argument (Fredholm
alternative), the extension to Kerr and Myers--Perry metrics, and the use of $P$ as a numerical
tool to detect instability.

%====================================================================
\section{Setting, conventions and statement of the problem}\label{sec:setting}
%====================================================================

\subsection{The metric and the normalization}
Let $n\ge4$, $p=n-3$, $d=n-2$, and let \nv{$h_Y$ be the round metric of the unit sphere
$Y=S^{n-2}$, so that $\Ric_{h_Y}=(n-3)h_Y$}. The Riemannian
Schwarzschild--Tangherlini metric is
\[
g=f\,d\tau^2+f^{-1}dr^2+r^2h_Y,\qquad f(r)=1-\Big(\frac{r_0}{r}\Big)^p,
\]
with $\tau$ periodic of period $4\pi r_0/p$. It is smooth and Ricci-flat on $M=\{r\ge r_0\}$, the
\nv{sphere $\{r=r_0\}\simeq S^{n-2}$} being a bolt.

By homothety, we may take $r_0=1$. Indeed, denote by $g_{r_0}$ the metric
above. The change of variables $(\tau,r)=r_0(\tilde\tau,\tilde r)$ gives
$g_{r_0}=r_0^2\big(f_1(\tilde r)\,d\tilde\tau^2+f_1(\tilde r)^{-1}d\tilde r^2+\tilde r^2h_Y\big)$,
that is, $g_{r_0}$ is isometric to $r_0^2\,g_1$. Moreover, for a constant $c>0$, the Levi-Civita
connection of $c^2g$ is that of $g$, so that, in orthonormal frames,
$\Delta_L^{c^2g}=c^{-2}\Delta_L^{g}$, and similarly for $\Lc$. With the normalization of
Section~\ref{subsec:source}, the frame components of $k$ (in the orthonormal frame
$e_\tau,e_r,r^{-1}\epsilon_i$, with $\epsilon_i$ orthonormal for $h_Y$) are functions of $r/r_0$
only. Since $\Delta_Lh=k+\Lc X$ and $\Delta_LH=h+\Lc\tilde X$, the frame components of $h$ and
$H$ for a general $r_0$ are obtained from those computed for $r_0=1$, denoted $h_1$ and $H_1$, as
\[
h_{r_0}(r)=r_0^{2}\,h_1(r/r_0),\qquad H_{r_0}(r)=r_0^{4}\,H_1(r/r_0).
\]
For instance, for $n=4$ we find $h_1=\frac3{16}\big(g_1-2r^2h_Y\big)$ in Section~\ref{sec:H}; this
gives $h_{r_0}=\frac{3r_0^2}{16}\big(g_{r_0}-2r^2h_Y\big)$, that is,
$h=\frac{3m^2}4\big(g-2r^2g_{S^2}\big)$ for $r_0=2m$, in accordance with Theorem~\ref{thm:h4}.
Similarly, the tensor $H_1=\frac18\big(2r-1-\frac{r^3\ln r}{r-1}\big)(P_0-P_1)$ of
Section~\ref{sec:H} gives
$H_{r_0}=\frac{r_0^4}8\big(2\frac r{r_0}-1-\frac{(r/r_0)^3\ln(r/r_0)}{r/r_0-1}\big)(P_0-P_1)$.
Note that smoothness at the bolt is preserved by the rescaling.
From now on, $r_0=1$ unless otherwise stated.

We will write
\[
D(r)=r^p-1,\qquad f=\frac{D}{r^p},\qquad f'=p\,r^{-p-1},\qquad r^df'=p,
\]
the last identity being used constantly. The basic invariant tensors are
\[
P_0=f\,d\tau^2,\qquad P_1=f^{-1}dr^2,\qquad P_2=r^2h_Y\quad(\text{of rank }d),
\]
so that $g=P_0+P_1+P_2$. An invariant diagonal tensor reads $aP_0+bP_1+cP_2$, its trace is
$a+b+dc$ and its trace-free part is $[T]_0=T-\frac{\tr T}{n}g$. We denote by $e_\tau,e_r$ the unit
vectors along $\partial_\tau,\partial_r$ (so that $P_0=e_\tau^2$ and $P_1=e_r^2$), and call
$a,b,c$ the frame components.

\subsection{Conventions}
On functions $\Delta=-\tr\nabla^2=d^*d$, with $d^*\omega=-\nabla^i\omega_i$ on $1$-forms. The
divergence of a symmetric $2$-tensor is
\[
(\dv h)_i=-\nabla^jh_{ij}.
\]
Vector fields and $1$-forms are identified through $g$. We write
$\mathcal L\omega=\nabla\omega+(\nabla\omega)^T$ (so that $\mathcal LX^\flat=\mathcal L_Xg$),
$D\Ric(g)$ for the linearization of $\Ric$ at $g$, and $B=2\dv+d\tr$ for the Bianchi operator.
\emph{The prime $'$ denotes the derivative with respect to $r$.} On a Ricci-flat manifold
the Lichnerowicz Laplacian is
\[
\Delta_L=\nabla^*\nabla-2\Rm,\qquad (\Rm h)_{ij}=R_{ikjl}h^{kl},
\]
with the sign convention $\Rm h=(\tr h)g-h$ on the unit sphere. The conformal Killing operator is
\[
\Lc X=\mathcal L_Xg+\frac2n(d^*X)\,g .
\]
The stability quadratic form is
\[
\mathcal Q(u)=\int_M\big(|\nabla u|^2-2\langle\Rm u,u\rangle\big)\,d\mu_g=\langle \Delta_Lu,u\rangle_{L^2}
\quad\text{for compactly supported }u .
\]

\subsection{The source tensor}\label{subsec:source}
Let $K=\partial_\tau$, $\kappa=K^\flat=f\,d\tau$, $|K|^2=f$, $F=d\kappa$. With
\[
a_n=\frac{n+2}{2(n-1)},\qquad b_n=\frac3{2(n-1)},\qquad \lambda_n=n-1-a_n,
\]
let $u$ be the smooth radial solution of $\Delta u=a_n f+\lambda_n$, given by
\[
r^df\,u'=-r^{d+1}+\frac{a_n}2r^2+1-\frac{a_n}2,
\]
and let $k$ be Ozuch's TT tensor \cite{Ozu26}
\[
k=\nabla^2u-\kappa\otimes\kappa+\Big(b_nf+\frac{\lambda_n}n\Big)g .
\]
It satisfies $\Delta_Lk=(F\circ F)_0-\frac{n-4}{2(n-1)}(\nabla^2f)_0$. Its part along $Y$ is
$\frac{n-4}{4(n-1)}r^{-p}+O(r^{-p-2})$, so $k=O(r^{3-n})$ for $n\ge5$. For $n=4$ (with $r_0=2m$)
\nv{the frame components of $k$ are}
\[
k_{\tau\tau}=-\frac{m^2}{r^2}-\frac{2m^3}{r^3},\qquad k_{rr}=\frac{m^2}{r^2}-\frac{6m^3}{r^3},\qquad
k|_{S^2}=\frac{4m^3}{r^3}\,\nv{P_2},
\]
hence $k=O(r^{-2})$.

\begin{theorem}[Ozuch \cite{Ozu26}]\label{thm:Ozuch}
For every $n\ge4$, $\mathcal Q(k)$ is finite and negative.
\end{theorem}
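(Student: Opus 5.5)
The plan is to compute $\mathcal Q(k)$ directly. Since $k$ is not compactly supported, we cannot simply write $\mathcal Q(k)=\langle\Delta_Lk,k\rangle_{L^2}$. Instead we use the energy form
\[
\mathcal Q(k)=\int_M\big(|\nabla k|^2-2\langle\Rm k,k\rangle\big)\,d\mu_g .
\]
We integrate by parts on the region $M_R=\{1\le r\le R\}$ and then let $R\to\infty$. There is no boundary term at the bolt, because $M$ is smooth there and $k$ is smooth. The only boundary term is therefore a flux on $\{r=R\}$, of the form $\int_{r=R}\langle\nabla_{e_r}k,k\rangle$.

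\textbf{Step 1: finiteness.} Expanding the explicit formula for $k$ gives $k=O(r^{3-n})$ for $n\ge5$ and $k=O(r^{-2})$ for $n=4$. The formula for $u'$ then shows that the derivatives gain one power of $r$ each, so $|\nabla k|=O(r^{2-n})$. The curvature decays like $r^{-n+1}$. Hence $|\nabla k|^2$ and $\langle\Rm k,k\rangle$ are $O(r^{4-2n})$ against a volume growth of $r^{n-2}\,dr$. The integrals converge, since $4-2n+n-2=2-n<-1$ for $n\ge4$. The flux term is $O(r^{3-n}r^{2-n}r^{n-2})=O(r^{3-n})\to0$ for $n\ge4$. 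Therefore
\[
\mathcal Q(k)=\lim_{R\to\infty}\int_{M_R}\langle\Delta_Lk,k\rangle ,
\]
and the integrand is absolutely integrable.

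\textbf{Step 2: reduce to an explicit radial integral.} We use the stated identity
\[
\Delta_Lk=(F\circ F)_0-\tfrac{n-4}{2(n-1)}(\nabla^2f)_0 .
\]
Both terms on the right are invariant diagonal tensors, and we expand them in $P_0,P_1,P_2$. For $F=f'\,dr\wedge d\tau$ we have $F\circ F=f'^2(P_0+P_1)$. The Hessian $\nabla^2f$ has frame components $(\tfrac12ff'\cdot f^{-1}f', \dots)$, which are computed from the Christoffel symbols of $g$. The tensor $k$ is likewise diagonal, $k=aP_0+bP_1+cP_2$, with $a,b,c$ expressed through $u',u''$ and $f$. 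The pointwise inner product is then
\[
a\alpha+b\beta+dc\gamma ,
\]
where $(\alpha,\beta,\gamma)$ are the components of $\Delta_Lk$. With $d\mu_g=r^d\,d\tau\,dr\,d\mathrm{vol}_Y$, the problem becomes a single integral $\int_1^\infty(\cdots)r^d\,dr$ of elementary functions of $r$.

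\textbf{Step 3: sign.} We aim to show that the radial integrand, after possibly one further integration by parts using $r^df'=p$ and the first-order equation for $u'$, is pointwise negative, or that it integrates to a closed form that is visibly negative. For $n=4$ the integral is fully explicit, using the displayed components of $k$ and $F\circ F=\frac{4m^2}{r^4}(P_0+P_1)$, and is a rational integral we can evaluate. For general $n$, I would first try to rewrite the integrand as a total derivative plus a negative-definite term. The motivation is the origin of $k$ in the Killing field $\partial_\tau$: we have $\nabla^*\nabla\kappa=0$, and $\kappa\otimes\kappa$ is the leading destabilizing piece. That suggests writing $\mathcal Q(k)$ as
\[
-c\int|F|^2|\kappa|^2+(\text{positive terms with explicitly smaller constants}).
\]
The main obstacle is this uniform-in-$n$ sign verification. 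The constants $a_n,b_n,\lambda_n$ are tuned exactly so that the trace-free and TT conditions hold, so the cancellations are delicate. I expect to need careful bookkeeping of the boundary terms at $r=1$. At the bolt $f=0$, and terms like $r^df\,u'$ evaluate to $1-\frac{a_n}{2}+\frac{a_n}{2}-1=0$, so these boundary terms should vanish.
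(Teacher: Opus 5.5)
Your Step~1 is sound. It is essentially the paper's own justification for identifying the energy expression with $\lim_{R\to\infty}\int_{\{r\le R\}}\langle\Delta_Lk,k\rangle$, since the flux is $O(R^{3-n})$, and $O(R^{-3})$ for $n=4$.

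The gap is Step~3. The negativity of $\mathcal Q(k)$, which is the actual content of the theorem, is never established. You only announce an intention (``we aim to show'', ``I would first try'') and a heuristic shape $-c\int|F|^2|\kappa|^2+(\text{positive terms})$, with no identity that produces it. As written, nothing rules out $\mathcal Q(k)\ge0$. The paper settles this point by quoting Ozuch's closed formulas:
\begin{itemize}
\item for $n\ge5$, $\mathcal Q(k)=-\frac{3(n+2)}{4(n-1)}\int|df|^2-\frac{n-5}2\int|F|^2$;
\item for $n=4$, $\mathcal Q(k)=-\frac12\int(|df|^2-|d\varpi|^2)$, where $d\varpi=\iota_K({*F})=0$ because $\partial_\tau$ is hypersurface orthogonal.
\end{itemize}

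Your route can nevertheless be completed in a few lines. The missing idea is to use that $k$ is TT, rather than looking for a pointwise sign of the whole integrand.
\begin{itemize}
\item Since $\dv k=0$ and $\tr k=0$, the integral $\int_{\{r\le R\}}\langle(\nabla^2f)_0,k\rangle$ reduces to the flux $\int_{\{r=R\}}k(\nu,\nabla f)=O(R^{3-n})$. So the Hessian term drops out in the limit; for $n=4$ its coefficient $\frac{n-4}{2(n-1)}$ vanishes anyway.
\item Again by tracelessness, $\langle F\circ F,k\rangle=f'^2(k_{\tau\tau}+k_{rr})=-d\,f'^2c$, where $c$ is the $P_2$-coefficient of $k$.
\item Since $\nabla^2u|_Y=\frac{fu'}rP_2$ and $\kappa\otimes\kappa$ has no $Y$-part, the formula for $r^dfu'$ gives
\[
c=\frac{fu'}r+b_nf+\frac{\lambda_n}n=\frac{n-4}{4(n-1)}\,r^{-p}+\frac{3(n-2)}{4(n-1)}\,r^{-p-2}>0 .
\]
Here the constant term $-1+b_n+\lambda_n/n$ cancels exactly.
\end{itemize}
Hence $\mathcal Q(k)=-d\int_Mf'^2c\,d\mu_g<0$. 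With $r_0=1$ this equals $-\frac{p(p+1)(4p-1)}{8(p+2)}\cdot\frac{4\pi}{p}\,\mathrm{vol}(S^{n-2},h_Y)$. This agrees with Ozuch's formula, taking $|df|^2=ff'^2$ and $|F|^2=f'^2$. Without this computation, or the citation, your proposal does not prove the statement.
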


\begin{proof}[Comments]
For $n\ge5$ this is \cite[Section 3.4 and (41)]{Ozu26}:
$\mathcal Q(k)=-\frac{3(n+2)}{4(n-1)}\int|df|^2-\frac{n-5}2\int|F|^2<0$. For $n=4$,
\cite[Remark 3.6]{Ozu26} gives $\mathcal Q(k)=-\frac12\int(|df|^2-|d\varpi|^2)$, where
$d\varpi=\iota_K({*F})$ \nv{(this function is denoted $\chi$ in \cite{Ozu26})}. Since $K$ is
hypersurface orthogonal, $\kappa\wedge d\kappa=0$, so $d\varpi=0$ and
$\mathcal Q(k)=-\frac12\int|df|^2<0$.

\nv{Here $\mathcal Q(k)$ is the energy expression
$\int_M\big(|\nabla k|^2-2\langle\Rm k,k\rangle\big)\,d\mu_g$, which is the one used in
Section~\ref{sec:main}. Indeed the formulas above are obtained by integrations by parts on
$\{r\le R\}$ and letting $R\to\infty$; since $k=O(r^{a_k})$ and $\nabla k=O(r^{a_k-1})$, with
$a_k=3-n$ for $n\ge5$ and $a_k=-2$ for $n=4$, every boundary term on $\{r=R\}$ is
$O(R^{2a_k+n-3})$, that is $O(R^{3-n})$ for $n\ge5$ and $O(R^{-3})$ for $n=4$, and tends to $0$.
This holds in particular for $n=5$, where $k\notin L^2$.}
\end{proof}

\subsection{The problem}
We want to find $h$, then $H$, symmetric, trace-free and smooth (including at the bolt), such that
\[
\Delta_Lh-k\in\operatorname{Im}\Lc,\qquad \Delta_LH-h\in\operatorname{Im}\Lc,
\]
where the vector fields involved are also smooth, and where each solution grows at most two powers
of $r$ faster than its source (possibly up to logarithmic factors).
In order to reduce the problem to ODEs, we also look for solutions invariant under
\nv{$U(1)\times O(n-1)$ and under the reflection $\tau\mapsto-\tau$}.

%====================================================================
\section{Method}\label{sec:method}
%====================================================================

\subsection{Three identities on a Ricci-flat manifold}
We first recall some basic facts (see e.g.\ \cite{DelTT}).
\begin{lemma}\label{lem:comm}
On a Ricci-flat manifold:
\begin{enumerate}
\item $\Delta_L(\varphi g)=(\Delta\varphi)\,g$; in particular $\Delta_L$ commutes with the
trace-free projection: $\Delta_L([T]_0)=[\Delta_LT]_0$;
\item $\Delta_L(\mathcal L_Zg)=\mathcal L_{\nabla^*\nabla Z}\,g$ and $\Delta_L\,\Lc=\Lc\,\Delta_H$;
\item $\nabla^2\varphi=\frac12\mathcal L_{\nabla\varphi}g$.
\end{enumerate}
\end{lemma}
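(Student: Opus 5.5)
The plan is to verify the three identities by direct computation with the commutation formulas for covariant derivatives, using $\Ric=0$ throughout.

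For item 3, the identity is immediate from the definitions. We have $\mathcal L_{\nabla\varphi}g=\nabla d\varphi+(\nabla d\varphi)^T$, and the Hessian of a function is symmetric (the connection is torsion-free). Hence $\mathcal L_{\nabla\varphi}g=2\nabla^2\varphi$.

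For item 1, I compute $\nabla^*\nabla(\varphi g)=(\nabla^*\nabla\varphi)g=(\Delta\varphi)g$, since $g$ is parallel. I also compute $\Rm(\varphi g)_{ij}=\varphi R_{ikj}{}^{k}$, which is $\pm\varphi\Ric_{ij}=0$. This gives $\Delta_L(\varphi g)=(\Delta\varphi)g$. Next I check that $\tr\Delta_L T=\Delta\tr T$. The trace commutes with $\nabla^*\nabla$, and $\tr\Rm T=g^{ij}R_{ikjl}T^{kl}$ is, up to sign, $\Ric_{kl}T^{kl}=0$. Writing $[T]_0=T-\frac{\tr T}{n}g$ and applying the first formula to $\varphi=\tr T/n$, I get $\Delta_L[T]_0=\Delta_LT-\frac{\Delta\tr T}{n}g=[\Delta_LT]_0$.

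Item 2 is the main computation. I expand $\nabla^*\nabla(\nabla Z+(\nabla Z)^T)$ and commute $\nabla^j\nabla_j$ past $\nabla_i$ using the Ricci identity twice. The first commutation $\nabla^j\nabla_j\nabla_iZ_k-\nabla^j\nabla_i\nabla_jZ_k$ produces $R*\nabla Z$ terms. The second, $\nabla^j\nabla_i\nabla_jZ_k-\nabla_i\nabla^j\nabla_jZ_k$, produces $\Ric*\nabla Z$, which vanishes, plus terms $\nabla R*Z$. By the contracted second Bianchi identity, $\nabla^jR_{jkil}$ equals a difference of derivatives of $\Ric$, so these terms also vanish on a Ricci-flat manifold. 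After symmetrizing in $(i,k)$, the surviving $R*\nabla Z$ terms are exactly $2\Rm(\mathcal L_Zg)$ with the paper's sign convention. I will check that sign on the round sphere using $\Rm h=(\tr h)g-h$. The result is $\nabla^*\nabla\mathcal L_Zg-2\Rm\mathcal L_Zg=\mathcal L_{\nabla^*\nabla Z}g$.

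For the conformal statement, I use that $\Delta_H=\nabla^*\nabla$ on $1$-forms because $\Ric=0$ (Bochner). I also use that $d^*$ intertwines $\Delta_H$ on $1$-forms with $\Delta$ on functions, since $d^*\Delta_H=\Delta d^*$. Then, by item 1,
\[
\Delta_L\Lc X=\mathcal L_{\nabla^*\nabla X}g+\tfrac2n\big(\Delta d^*X\big)g=\mathcal L_{\Delta_HX}g+\tfrac2n\big(d^*\Delta_HX\big)g=\Lc\Delta_HX .
\]

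The only delicate point is the sign bookkeeping in the curvature terms of item 2. A direct way to confirm it is to take $Z=\nabla\varphi$. In that case the result, by item 3, must agree with $\Delta_L\nabla^2\varphi=\nabla^2\Delta\varphi$, the standard Ricci-flat commutation, together with $\nabla^*\nabla d\varphi=d\Delta\varphi$.
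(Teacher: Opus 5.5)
Your argument is correct. The paper gives no proof of this lemma; it recalls the three identities as classical facts on Ricci-flat manifolds and refers to \cite{DelTT}. Your direct computation is therefore a genuinely different, self-contained route. Its advantage is that it shows exactly where $\Ric=0$ is used:
\begin{itemize}
\item in item~1, through $\Rm g=\Ric$ and $\tr\Rm T=\langle\Ric,T\rangle$;
\item in item~2, through the Ricci contraction and the divergence of the curvature tensor (contracted Bianchi identity);
\item in the conformal version, through the Bochner identity $\Delta_H=\nabla^*\nabla$ on $1$-forms.
\end{itemize}
The citation is simply more economical.

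One point of bookkeeping in item~2 should be fixed when you write it out.
\begin{itemize}
\item Differentiating the Ricci identity for $Z$ (your first commutation) produces the term $(\nabla^jR_{jik}{}^{l})Z_l$, which vanishes by the contracted Bianchi identity, together with one $R*\nabla Z$ term.
\item Commuting $\nabla_j$ and $\nabla_i$ on the $2$-tensor $\nabla Z$ (your second commutation) produces the Ricci contraction and a second $R*\nabla Z$ term, identical to the first. It does not produce a $\nabla R*Z$ term.
\end{itemize}
The totals are nevertheless as you say. The two equal curvature terms, symmetrized in $(i,k)$, give exactly $2\Rm(\mathcal L_Zg)$ with the paper's sign convention, so $\nabla^*\nabla\mathcal L_Zg=\mathcal L_{\nabla^*\nabla Z}g+2\Rm(\mathcal L_Zg)$.

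If you prefer to avoid the sign bookkeeping altogether, Lemma~\ref{lem:ricci} gives a shorter route.
\begin{itemize}
\item By diffeomorphism invariance, $D\Ric(g)(\mathcal L_Zg)=\mathcal L_Z\Ric(g)=0$.
\item A single Ricci identity gives $\dv\mathcal L_Zg=\nabla^*\nabla Z+dd^*Z$, and directly $d\tr\mathcal L_Zg=-2\,dd^*Z$.
\item Hence $B(\mathcal L_Zg)=2\nabla^*\nabla Z$, and $\Delta_L\mathcal L_Zg=\frac12\mathcal L(2\nabla^*\nabla Z)=\mathcal L_{\nabla^*\nabla Z}g$.
\end{itemize}
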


\begin{lemma}[Lichnerowicz Laplacian and linearized Ricci]\label{lem:ricci}
For every symmetric $2$-tensor $h$ on a Ricci-flat manifold,
\[
\Delta_L h=2\,D\Ric(g)h+\tfrac12\,\mathcal L\big(B(h)\big),\qquad B(h)=2\dv h+d\tr h .
\]
\end{lemma}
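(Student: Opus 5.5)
We prove the identity by a direct computation in normal coordinates, using only the standard linearization of the Ricci tensor and the Ricci-flatness of $g$. Recall the classical formula, valid on any Riemannian manifold,
\[
2\,D\Ric(g)h=\nabla^*\nabla h-2\Rm h+\Ric\circ h+h\circ\Ric-\mathcal L(\delta^* \text{-type term}),
\]
which we will derive in its precise form:
\[
2\,D\Ric(g)h_{ij}=-\nabla^k\nabla_k h_{ij}+\nabla^k\nabla_i h_{kj}+\nabla^k\nabla_j h_{ik}-\nabla_i\nabla_j\tr h .
\]
We obtain this by differentiating $R_{ij}=\partial_k\Gamma^k_{ij}-\partial_j\Gamma^k_{ik}+\Gamma\Gamma$ at a point with normal coordinates, using $\dot\Gamma^k_{ij}=\frac12 g^{kl}(\nabla_ih_{jl}+\nabla_jh_{il}-\nabla_lh_{ij})$. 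This is standard and we will just quote it.

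Next we commute covariant derivatives in the two middle terms. With the Ricci identity,
\[
\nabla^k\nabla_i h_{kj}=\nabla_i\nabla^k h_{kj}+R_{i}{}^{l}h_{lj}-R^k{}_{i}{}^{l}{}_{j}h_{kl}\cdot(\pm)
\]
where on a Ricci-flat manifold the Ricci contraction vanishes and the curvature term equals $-(\Rm h)_{ij}$ with the paper's convention $(\Rm h)_{ij}=R_{ikjl}h^{kl}$ (the sign is fixed by checking on the unit sphere, where $\Rm h=(\tr h)g-h$). Doing the same for $\nabla^k\nabla_jh_{ik}$ gives
\[
2\,D\Ric(g)h=\nabla^*\nabla h-2\Rm h+\nabla_i\nabla^kh_{kj}+\nabla_j\nabla^kh_{ik}-\nabla_i\nabla_j\tr h .
\]
The first two terms are exactly $\Delta_Lh$. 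With $(\dv h)_j=-\nabla^kh_{kj}$ and $\mathcal L\omega=\nabla\omega+(\nabla\omega)^T$, the remaining terms are $-\mathcal L(\dv h)-\nabla^2\tr h=-\mathcal L(\dv h)-\frac12\mathcal L(d\tr h)=-\frac12\mathcal L(2\dv h+d\tr h)=-\frac12\mathcal L(B(h))$. Here we used Lemma~\ref{lem:comm}(3). Rearranging gives $\Delta_Lh=2D\Ric(g)h+\frac12\mathcal L(B(h))$.

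The only delicate point is the sign bookkeeping in the curvature term: the paper's sign conventions for $\Delta$, $\dv$, and $\Rm$ must be used consistently. We will control this by testing the final formula on a known case. One check is $h=\mathcal L_Zg$, where $D\Ric(g)h=\mathcal L_Z\Ric=0$; the formula must then reduce to Lemma~\ref{lem:comm}(2), using $B(\mathcal L_Zg)=-2\nabla^*\nabla Z$ on a Ricci-flat manifold. A second check is the unit sphere, where $\Rm$ is given explicitly.
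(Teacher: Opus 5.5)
Your computation is correct and is essentially the paper's proof: the paper simply quotes the classical identity $D\Ric(g)=\frac12\Delta_L-\frac14\mathcal L B$ from Besse, and your first-variation formula combined with the Ricci identity $\nabla^k\nabla_ih_{kj}=\nabla_i\nabla^kh_{kj}+\Ric_{il}h^l{}_j-(\Rm h)_{ij}$ is exactly the standard derivation of that identity. There is one sign slip in your proposed sanity check: on a Ricci-flat manifold, with the paper's conventions, $\dv(\mathcal L_Zg)=\nabla^*\nabla Z+dd^*Z$ and $d\tr(\mathcal L_Zg)=-2dd^*Z$, so $B(\mathcal L_Zg)=+2\nabla^*\nabla Z$, not $-2\nabla^*\nabla Z$; this sign is what makes the identity reproduce $\Delta_L(\mathcal L_Zg)=\mathcal L_{\nabla^*\nabla Z}g$ from Lemma~\ref{lem:comm}(2), whereas your sign would make the check wrongly signal an error.
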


\begin{proof}
This is the classical formula $D\Ric(g)=\frac12\Delta_L-\frac14\mathcal LB$ \cite{Bes87}.
\end{proof}

\begin{proposition}[Principle of the method]\label{prop:principle}
Let $S$ be a symmetric $2$-tensor with $S\equiv k$ modulo $\operatorname{Im}\mathcal L+C^\infty(M)g$.
If there are $\bar h$, a function $\varphi$ and a vector field $\xi$ with
\[
2\,D\Ric(g)\bar h=S+\varphi g+\mathcal L_\xi g,
\]
then $h:=[\bar h]_0$ satisfies $\Delta_L h=k+\Lc X$ for an explicit vector field $X$.
\end{proposition}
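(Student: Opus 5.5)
We need to prove Proposition~\ref{prop:principle}. The plan is to start from Lemma~\ref{lem:ricci}, turn the hypothesis into a formula for $\Delta_L\bar h$, and then take trace-free parts. By Lemma~\ref{lem:ricci},
\[
\Delta_L\bar h=2D\Ric(g)\bar h+\tfrac12\mathcal L(B\bar h)=S+\varphi g+\mathcal L_\xi g+\tfrac12\mathcal L(B\bar h).
\]
By hypothesis we can write $S=k+\mathcal L\omega+\psi g$ for some $1$-form $\omega$ and function $\psi$. Hence
\[
\Delta_L\bar h=k+\mathcal L Z+\chi g,\qquad Z:=\omega+\xi^\flat+\tfrac12 B(\bar h),\quad \chi:=\varphi+\psi .
\]
Here we use the convention $\mathcal L\xi^\flat=\mathcal L_\xi g$.

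Next we take trace-free parts. Lemma~\ref{lem:comm}(1) gives $\Delta_L[\bar h]_0=[\Delta_L\bar h]_0$. Since $k$ is TT, it is in particular trace-free, so $[k]_0=k$. The pure-trace term $\chi g$ disappears under the trace-free projection. It remains to identify $[\mathcal L Z]_0$. We compute $\tr\mathcal L Z=2\nabla^iZ_i=-2d^*Z$, so
\[
[\mathcal L Z]_0=\mathcal L Z-\frac{\tr\mathcal L Z}{n}g=\mathcal L_Zg+\frac2n(d^*Z)g=\Lc Z .
\]
This shows that $h=[\bar h]_0$ satisfies $\Delta_Lh=k+\Lc X$ with $X=Z^\sharp$. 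Written out,
\[
X=\omega^\sharp+\xi+\tfrac12\big(2\dv\bar h+d\tr\bar h\big)^\sharp .
\]
Here $\omega$ is the $1$-form from the decomposition of $S-k$. The formula is explicit as soon as that decomposition is.

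There is no real obstacle. The only points to check are the sign conventions. First, $\tr\mathcal L Z=-2d^*Z$ holds with $d^*\omega=-\nabla^i\omega_i$, so that the trace-free part of $\mathcal L$ is exactly $\Lc$ as defined in the paper. Second, one uses that $k$ is trace-free. If smoothness of $X$ is wanted, it follows from the smoothness of $\bar h$, $\xi$, and $\omega$.
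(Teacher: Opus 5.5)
Your proposal is correct and follows essentially the same route as the paper: apply Lemma~\ref{lem:ricci}, take trace-free parts using Lemma~\ref{lem:comm}, and absorb $\operatorname{Im}\mathcal L+C^\infty(M)g$ into $\operatorname{Im}\Lc$. You additionally spell out the identity $[\mathcal L Z]_0=\Lc Z$, with the correct sign from $d^*\omega=-\nabla^i\omega_i$, and give the explicit formula for $X$, both of which the paper leaves implicit.
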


\begin{proof}
By Lemma~\ref{lem:ricci}, $\Delta_L\bar h=S+\varphi g+\mathcal L_\xi g+\frac12\mathcal LB(\bar h)$.
Take the trace-free part using Lemma~\ref{lem:comm}, and replace $S$ by $k$ modulo
$\operatorname{Im}\Lc$.
\end{proof}

\begin{remark}
Let us explain why this is simpler than inverting $\Delta_L$ directly.
\begin{itemize}
\item $D\Ric(g)(\mathcal L_Zg)=0$: a gauge condition can be imposed on the unknown $\bar h$ at no
cost (for instance: no $P_2$ component), and the vector field $\xi$ in the right-hand side can be
chosen freely.
\item The source can be simplified before any computation is done: since
$\nabla^2u=\frac12\mathcal L_{\nabla u}g$,
\[
k\equiv-\kappa\otimes\kappa=-f\,P_0\qquad\text{modulo }\operatorname{Im}\mathcal L+C^\infty g,
\]
and the function $u$ disappears.
\item $\bar h=2\alpha P_0+2\beta P_1$ is the variation of a family of static metrics
$N^2d\tau^2+e^{2\Lambda}dr^2+r^2h_Y$ (with $N=\sqrt f(1+\alpha)$, $e^{2\Lambda}=f^{-1}(1+2\beta)$),
whose Ricci tensor is known in closed form, so that linearizing amounts to differentiating scalar
expressions.
\item The linearized Einstein equations contain a first-order constraint, which makes the system
triangular.
\item The homogeneous solutions (modulo gauge) are the geometric deformations of the family
(rescaling of $\tau$, variation of the mass, growing mode), which makes it possible to use
variation of constants.
\end{itemize}
\end{remark}

\subsection{Ricci curvature of static metrics with \nv{$U(1)\times O(n-1)$} symmetry}
The following formulas are the classical expressions of the Ricci curvature of a warped
product (see \cite[Corollary 7.43]{ONe83} or \cite[Proposition 9.106]{Bes87}) applied to the static
metric $g=N^2d\tau^2+\hat g$; see also \cite{And00} for static vacuum metrics. For
$\hat g=e^{2\Lambda}dr^2+r^2h_Y$ and $N=N(r)$,
\[
\Ric(e_0,e_0)=N^{-1}\hat\Delta N,\qquad \Ric|_{\rm spatial}=\widehat{\Ric}-N^{-1}\hat\nabla^2N,
\]
where $\hat\Delta=-\tr_{\hat g}\hat\nabla^2$ and
\[
\hat\Delta N=-e^{-\Lambda}r^{-d}\big(r^de^{-\Lambda}N'\big)',\quad
\hat\nabla^2N(\partial_r,\partial_r)=N''-\Lambda'N',\quad
\hat\nabla^2N|_Y=r\,e^{-2\Lambda}N'\,h_Y,
\]
\[
\widehat{\Ric}(\partial_r,\partial_r)=\frac{d\,\Lambda'}r,\qquad
\widehat{\Ric}|_Y=\Big[(d-1)-e^{-2\Lambda}\big((d-1)-r\Lambda'\big)\Big]h_Y .
\]

\subsection{The linearized equations}
Set $\bar h=2\alpha P_0+2\beta P_1$ and let $\Phi_0,\Phi_1,\Phi_2$ be the coefficients of
$2\,D\Ric(g)\bar h$ on $P_0,P_1,P_2$. Linearizing the formulas above at $N=\sqrt f$,
$e^{-2\Lambda}=f$, and using $(d-1)f+rf'=d-1$, one gets
\begin{align}
\Phi_0&=-\frac2{r^d}\Big(r^df\alpha'+\frac p2(\alpha-\beta)\Big)',\label{eq:Phi0}\\
\Phi_1&=\frac{2df}r\beta'-2f\alpha''-3f'\alpha'+f'\beta',\label{eq:Phi1}\\
\Phi_2&=\frac{4(d-1)}{r^2}\beta+\frac{2f}r(\beta'-\alpha').\label{eq:Phi2}
\end{align}
For an invariant source $S=S_0P_0+S_1P_1+S_2P_2$ and a radial field $\xi=\xi(r)\partial_r$,
\[
\mathcal L_\xi g=\frac{\xi f'}fP_0+\Big(2\xi'-\frac{\xi f'}f\Big)P_1+\frac{2\xi}rP_2,
\]
and after elimination of $\varphi$ the equation $2\,D\Ric(g)\bar h=S+\varphi g+\mathcal L_\xi g$
becomes
\begin{align}
\Phi_0-\Phi_1&=S_{01}-2f\Big(\frac\xi f\Big)',\label{eq:01}\\
\Phi_1-\Phi_2&=S_{12}+2\xi'-\xi\Big(\frac{f'}f+\frac2r\Big),\label{eq:12}
\end{align}
with $S_{ij}=S_i-S_j$. The key point is that second derivatives cancel in \eqref{eq:01}:
\begin{equation}\label{eq:constraint}
\boxed{\;\Phi_0-\Phi_1=-\frac{2df}r\,(\alpha+\beta)'\;}
\end{equation}
This is the constraint. Finally, with $\sigma=\alpha+\beta$ and $\beta=\sigma-\alpha$,
\begin{equation}\label{eq:Lalpha}
\Phi_1-\Phi_2=\mathcal L[\alpha]+\Big(\frac{2(d-1)f}r+f'\Big)\sigma'-\frac{4(d-1)}{r^2}\sigma,
\end{equation}
where
\[
\mathcal L[\alpha]=-2f\alpha''-\Big(\frac{2(d-2)f}r+4f'\Big)\alpha'+\frac{4(d-1)}{r^2}\alpha .
\]

\begin{lemma}[Homogeneous solutions]\label{lem:homog}
The equation $r^2f\alpha''+\big((d-2)rf+2r^2f'\big)\alpha'-2(d-1)\alpha=0$, i.e.
$\mathcal L[\alpha]=0$, has the solutions
\[
\alpha_1=\frac1{r^p-1}\ \ (\text{variation of the mass}),\qquad
\alpha_2=\frac{r^{p+2}-1}{r^p-1}\ \ (\text{growing mode}),
\]
with Wronskian $\alpha_1\alpha_2'-\alpha_1'\alpha_2=(p+2)r^{p+1}/D^2$. Moreover $\alpha=\sigma=c$
constant (rescaling of $\tau$) solves the full system without source. At infinity the
characteristic exponents are $2$ and $-p$.
\end{lemma}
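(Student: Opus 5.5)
Every claim in Lemma~\ref{lem:homog} can be checked directly, so the plan is a sequence of explicit verifications. With $r_0=1$ we have $D=r^p-1$, $f=D/r^p$, $f'=pr^{-p-1}$ and $d=p+1$. First, multiplying $\mathcal L[\alpha]=0$ by $-r^2/2$ gives exactly the displayed equation
\[
r^2f\alpha''+\big((d-2)rf+2r^2f'\big)\alpha'-2(d-1)\alpha=0.
\]
It is convenient to multiply through by $r^p$ and write the coefficients as polynomials in $r$: $r^{p+2}f=r^2D$, $r^p\big((d-2)rf+2r^2f'\big)=(p-1)rD+2p\,r$, and $2(d-1)r^p=2p\,r^p$.

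\emph{Checking $\alpha_1$.} For $\alpha_1=D^{-1}$ we have $\alpha_1'=-pr^{p-1}D^{-2}$ and $\alpha_1''=-p(p-1)r^{p-2}D^{-2}+2p^2r^{2p-2}D^{-3}$. Substituting and multiplying by $D^2/p$ leaves a polynomial identity in $r^p$, which should be checked term by term. The $r^{2p}$ terms are $2p-(p-1)-2p+(p+1)\cdot$ contributions that I expect to cancel, and similarly for the lower powers. Alternatively, one can justify $\alpha_1$ conceptually: it is the $\alpha$-component of $\partial_{r_0}$ of the family $g_{r_0}$, suitably gauged.

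\emph{Checking $\alpha_2$.} Write $\alpha_2=(r^{p+2}-1)/D$. Rather than repeating the substitution, I would use reduction of order. Abel's formula gives the Wronskian
\[
W=C\exp\Big(-\int\frac{(d-2)}{r}+\frac{2f'}{f}\Big)=C\,r^{-(p-1)}f^{-2}=C\,r^{p+1}D^{-2}.
\]
The second solution is then $\alpha_1\int W/\alpha_1^2=\alpha_1\int Cr^{p+1}\,dr=\alpha_1\,\frac{C}{p+2}\big(r^{p+2}+\text{const}\big)$. Choosing the constant $-1$ and $C=p+2$ yields exactly $\alpha_2$, together with the claimed Wronskian $(p+2)r^{p+1}/D^2$. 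This proves both the second solution and the Wronskian, given $\alpha_1$. Consequently the only genuine computation is the verification for $\alpha_1$, and that is the step I expect to be the main (if routine) obstacle: keeping the powers $r^{2p-2}$, $r^{p-2}$ and the like straight.

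\emph{Constant solution.} For $\alpha=\sigma=c$ we have $\beta=0$. Then \eqref{eq:constraint} gives $\Phi_0-\Phi_1=0$. In \eqref{eq:Lalpha}, $\sigma'=0$, and $\mathcal L[c]=4(d-1)c/r^2$ cancels $-4(d-1)\sigma/r^2$, so \eqref{eq:12} and \eqref{eq:01} hold with $S=0$ and $\xi=0$. This is consistent with geometry, since $\bar h=2cP_0$ is the variation $N\mapsto(1+c)N$, i.e. a rescaling of $\tau$, which is locally Ricci-flat.

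\emph{Exponents at infinity.} As $r\to\infty$, $f\to1$ and $r^2f'\to0$, so the equation is asymptotically of Euler type, $r^2\alpha''+(d-2)r\alpha'-2(d-1)\alpha=0$. Its indicial equation is $s^2+(d-3)s-2(d-1)=0$, that is $(s-2)(s+d-1)=0$, with roots $s=2$ and $s=-(d-1)=-p$. This matches $\alpha_2\sim r^2$ and $\alpha_1\sim r^{-p}$.
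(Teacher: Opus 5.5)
Your plan matches what the paper intends. The lemma is stated without proof, as a direct verification. Your use of Abel's formula and reduction of order is a sensible economy: it yields both $\alpha_2$ and the Wronskian from $\alpha_1$ alone. Your $W=C\,r^{p+1}D^{-2}$, the choice $C=p+2$ with integration constant $-1$, the check of the constant solution via \eqref{eq:constraint}--\eqref{eq:Lalpha}, and the indicial polynomial $(s-2)(s+p)$ are all correct.

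However, the computation everything else rests on, $\mathcal L[\alpha_1]=0$, is never carried out. You stop at ``contributions that I expect to cancel'', and the partial coefficient string you wrote sums to $2$, not $0$. Since $\alpha_2$ and $W$ are derived from $\alpha_1$, this step must actually be done. In your polynomial form, with $\alpha_1'=-pr^{p-1}D^{-2}$ and $\alpha_1''=-p(p-1)r^{p-2}D^{-2}+2p^2r^{2p-2}D^{-3}$, it takes two lines:
\[
r^2D\alpha_1''+\big((p-1)rD+2pr\big)\alpha_1'-2pr^p\alpha_1
=\frac{-2p(p-1)r^p-2pr^p}{D}+\frac{2p^2r^{2p}-2p^2r^p}{D^2}
=-\frac{2p^2r^p}{D}+\frac{2p^2r^pD}{D^2}=0 .
\]
Alternatively, you can argue conceptually, and no gauge is needed, contrary to your ``suitably gauged''. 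At $r_0=1$,
\[
\partial_{r_0}g_{r_0}=-pr^{-p}d\tau^2+pr^{-p}f^{-2}dr^2=\tfrac pD(P_1-P_0),
\]
so $\alpha=-\frac p2\alpha_1$, $\beta=-\alpha$ and $\sigma=0$. Since every $g_{r_0}$ is Ricci-flat, $\Phi_0=\Phi_1=\Phi_2=0$, and \eqref{eq:Lalpha} with $\sigma\equiv0$ then gives $\mathcal L[\alpha_1]=0$.

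There is also a small slip in the last step. $r^2f'=p\,r^{1-p}$ does not tend to $0$ when $p=1$ ($n=4$); there it equals $1$. What you need is $rf'\to0$, i.e.\ the coefficient of $\alpha'$ is $(d-2)r+o(r)$, and this holds for all $p\ge1$. More precisely, after dividing by $r^2f$ the rescaled coefficients are
\[
rP=(p-1)+\frac{2p}{D},\qquad r^2Q=-\frac{2p\,r^p}{D}.
\]
Both are analytic in $r^{-1}$ at $r=\infty$, so $\infty$ is a regular singular point. Its indicial roots are then exactly $2$ and $-p$, matching $\alpha_2\sim r^2$ and $\alpha_1\sim r^{-p}$.
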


\subsection{Smoothness at the bolt}
Near $r=1$, we have $r-1\sim\varrho^2$, where $\varrho$ is the distance to the bolt. A tensor
$a\,e_\tau^2+b\,e_r^2+c\,P_2$ is smooth if and only if $a,b,c$ are smooth functions of $r$ and
$a(1)=b(1)$; in that case, $a-b=(r-1)\cdot(\text{smooth})$. A field $\zeta(r)\partial_r$ is
smooth if and only if $\zeta$ is smooth and $\zeta(1)=0$.

%====================================================================
\section{First step: $\Delta_Lh=k+\Lc X$}\label{sec:h}
%====================================================================

We take $S=-fP_0$, so that $S_{01}=-f$ and $S_{12}=0$, and we choose the gauge
$\xi=-\frac12rf\,\partial_r=\nabla w$, $w=-r^2/4$, in order to have $-2f(\xi/f)'=f$. Then
\eqref{eq:01} and \eqref{eq:constraint} give
\[
(\alpha+\beta)'=0,\qquad\text{i.e.}\qquad\beta=c_0-\alpha .
\]
The right-hand side of \eqref{eq:12} equals $2\xi'-\xi(f'/f+2/r)=-\frac12rf'=-\frac p2r^{-p}$,
and \eqref{eq:12}--\eqref{eq:Lalpha}, multiplied by $-r^2/2$, become the scalar ODE
\begin{equation}\label{eq:ODE1}
\boxed{\;r^2f\,\alpha''+\big((d-2)rf+2r^2f'\big)\alpha'-2(d-1)(\alpha-c_0)=\frac p4\,r^{2-p}\;}
\end{equation}
For $n=4$ ($p=1$, $d=2$), this reads $r(r-1)\alpha''+2\alpha'-2\alpha=\frac r4-2c_0$.

We then look for $\alpha=\sum_{j\ge0}B_jr^{2-(j+1)p}$. With the indicial polynomial
$C(s)=(s+p)(s-2)$ and the coupling coefficient $D(s)=s(p+2-s)$, we get $C(s_0)B_0=p/4$ with
$s_j=2-(j+1)p$, and then $C(s_j)B_j+D(s_{j-1})B_{j-1}=0$, whence $B_j=B_{j-1}=-\frac18$ for all
$j$ (at the resonances $jp=2$, the coefficient is free and corresponds to $\alpha_1$). The series
sums to
\[
\alpha_{\rm part}=-\frac18\,\frac{r^2}{r^p-1}.
\]
Adding $\frac18\alpha_1$ to remove the pole at $r=1$, the general solution is
\[
\alpha=c_0-A(r)+c_1\alpha_1+c_2\alpha_2,\qquad
\boxed{A(r)=\frac{r^2-1}{8(r^p-1)}}\qquad\big(A(1)=\tfrac1{4p}\big).
\]

For smoothness we take $c_1=0$ (to avoid a pole at $r=1$), and $c_2=0$ to avoid growth like
$r^2$. The condition $2\alpha(1)=2\beta(1)$ gives $c_0=\frac1{2p}$, hence
\[
\alpha=\frac1{2p}-A,\qquad\beta=A,\qquad\bar h=\Big(\frac1p-2A\Big)P_0+2A\,P_1 .
\]

\nv{For $n\ge6$, $A\to0$ at infinity and $[\bar h]_0$ tends to the constant tensor
$\frac1p[P_0]_0$; for $n=5$, $A\equiv\frac18$ and $[\bar h]_0=\frac14[P_0+P_1]_0$. In both cases
the term $\frac1p[P_0]_0$ is removed} by adding the gauge term $\Lc Z$ with
\[
Z=\frac1{4p}\nabla(r^2)=\frac{rf}{2p}\,\partial_r\qquad(\text{smooth, }Z|_{r=1}=0),
\]
which is allowed since $\Delta_L\Lc Z=\Lc\Delta_HZ$.

We thus obtain the following solution for $n\ge5$.
\begin{theorem}\label{thm:h}
For $n\ge5$ and $r_0=1$, the tensor
\[
h=\Big[\,2A\,(P_1-P_0)+r^{-p}\Big(\big(\tfrac1p+\tfrac12\big)P_0+\tfrac12P_1\Big)\Big]_0
\]
is smooth, trace-free, and satisfies $\Delta_Lh=k+\Lc X$ with $X=\chi(r)\partial_r$,
\[
\chi=\frac{3(n-2)}{8(n-1)}\big(r^{1-p}-r^{-1-p}\big)-2fA'-\frac{2(p+1)}rfA-2p\,r^{-1-p}A
+r^{-1-p}\big(r^{-p}-\tfrac12\big).
\]
In frame components, with $\mu=\frac{(n-2)\,r^{-p}}{n(n-3)}$:
$h_{\tau\tau}=-2A+(\frac1p+\frac12)r^{-p}-\mu$, $h_{rr}=2A+\frac12r^{-p}-\mu$, $h|_Y=-\mu$.
Moreover $h=-2A(P_0-P_1)+O(r^{3-n})=O(r^{5-n})$; for $n=5$, $A\equiv\frac18$.
\end{theorem}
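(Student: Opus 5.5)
The proof assembles the pieces of the computation above. It has three parts: the linearized equations, the gauge correction, and the explicit formulas.

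\emph{Linearized equations.} With $\alpha=\frac1{2p}-A$ and $\beta=A$, we have $\sigma=\alpha+\beta=\frac1{2p}$ constant. So \eqref{eq:constraint} gives $\Phi_0-\Phi_1=0$. This matches \eqref{eq:01} for $S_{01}=-f$ and $\xi=-\frac12 rf\partial_r$, because $-2f(\xi/f)'=f$. Since $\sigma$ is constant, \eqref{eq:Lalpha} reduces to $\mathcal L[\alpha]-\frac{4(d-1)}{r^2}\sigma$. The required equality with $-\frac p2 r^{-p}$ is exactly \eqref{eq:ODE1} with $c_0=\sigma$. It can be checked directly by substituting $A=\frac{r^2-1}{8(r^p-1)}$. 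Equivalently, note that $A=-\alpha_{\rm part}+\frac18\alpha_1$ and $\mathcal L[\alpha_1]=0$ by Lemma~\ref{lem:homog}; this reduces the check to the series identity for $\alpha_{\rm part}$, a one-line rational-function verification. Hence $2D\Ric(g)\bar h=S+\varphi g+\mathcal L_\xi g$ for some explicit $\varphi$, namely a combination of $\Phi_2$ and $\xi$. Proposition~\ref{prop:principle} then gives $\Delta_L[\bar h]_0=k+\Lc X_0$. Here $X_0$ collects three contributions, all radial:
\begin{itemize}
\item $\xi$;
\item the vector field $\frac12 B(\bar h)$ from Lemma~\ref{lem:ricci};
\item the field converting $k-S$ into $\Lc$-form, namely $\frac12\nabla u$ from $\nabla^2u=\frac12\mathcal L_{\nabla u}g$.
\end{itemize}
The trace parts are discarded via Lemma~\ref{lem:comm}, using $\mathcal L_Y g=\Lc Y-\frac2n(d^*Y)g$.

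\emph{Gauge correction.} Next set $h=[\bar h]_0-\Lc Z$ with $Z=\frac{rf}{2p}\partial_r$. By Lemma~\ref{lem:comm}, $\Delta_L\Lc Z=\Lc\Delta_H Z$, so $\Delta_Lh=k+\Lc X$ with $X=X_0-\Delta_HZ$. Since $Z=\frac1{4p}\nabla(r^2)$, the Hodge Laplacian commutes with $d$ and $\Delta(r^2)$ is an explicit radial function. Thus $\Delta_HZ=\frac1{4p}\nabla\Delta(r^2)$ is explicit.

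For the tensor itself, $\mathcal L_Zg=2\nabla^2(r^2/4p)$. The formula for $\mathcal L_\xi g$ with $\xi=rf/(2p)$ gives components $\frac{rf'}{2p}P_0+\big(\frac{f+rf'}{p}-\frac{rf'}{2p}\big)P_1+\frac fpP_2$. Using $rf'=pr^{-p}$, this equals $\frac1pg-r^{-p}\big(\frac1pP_0+\frac12 P_1\big)+\dots$ up to multiples of $g$ and $P_2$ terms that disappear after the trace-free projection. Subtracting it from $[\frac1pP_0-2AP_0+2AP_1]_0$ yields the stated bracket. Its frame components follow by computing the trace, which produces $\mu$. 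The collection of all terms of $X$ into the displayed $\chi$ is bookkeeping, using $r^df'=p$ and the expression for $u'$.

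\emph{Smoothness and asymptotics.} Smoothness at the bolt uses the criterion of the smoothness subsection. The functions $A$ and $r^{-p}$ are smooth on $r\ge1$. The coefficients of $e_\tau^2$ and $e_r^2$ differ by $-4A+\frac1pr^{-p}$, which vanishes at $r=1$ since $A(1)=\frac1{4p}$. The field $\chi$ is smooth, and one checks $\chi(1)=0$. Indeed, $f(1)=0$ kills two terms, and the remaining terms at $r=1$ give $0-2p\cdot\frac1{4p}+\frac12=0$. The asymptotic statement follows because $A=O(r^{2-p})=O(r^{5-n})$, while the other terms are $O(r^{-p})$.

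The main obstacle is the bookkeeping of $X$: correctly tracking $B(\bar h)$, $\nabla u$, $\xi$ and $\Delta_HZ$ with consistent sign conventions. I would verify it independently in the case $n=5$, where $A\equiv\frac18$, as a check.
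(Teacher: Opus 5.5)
Your route is the paper's: the theorem is the assembly of the Section~4 computation. Your checks of the constraint, of the reduction of \eqref{eq:12} to \eqref{eq:ODE1} with $c_0=\sigma=\frac1{2p}$, of $\mu$, of the bolt conditions $h_{\tau\tau}(1)=h_{rr}(1)$ and $\chi(1)=0$, and of the asymptotics are correct. There is one minor slip: $A=-\alpha_{\rm part}-\frac18\alpha_1$, not $+\frac18\alpha_1$, which is harmless since $\mathcal L[\alpha_1]=0$. However, the gauge-correction step, which is the step that produces the stated $h$, fails as written.

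From the components you correctly wrote down, using $rf'=pr^{-p}$ and $f=1-r^{-p}$, one gets
\[
\mathcal L_Zg=\frac fp\,g-\frac1pP_0+r^{-p}\Big(\big(\tfrac1p+\tfrac12\big)P_0+\tfrac12P_1\Big).
\]
The constant part is $\frac1p(P_1+P_2)=\frac1pg-\frac1pP_0$, not $\frac1pg$. Hence
\[
\Lc Z=[\mathcal L_Zg]_0=-\tfrac1p[P_0]_0+r^{-p}\big[(\tfrac1p+\tfrac12)P_0+\tfrac12P_1\big]_0 ,
\]
and it is precisely the term $-\frac1p[P_0]_0$ that cancels the constant $\frac1p[P_0]_0$ in $[\bar h]_0$. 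So one must take $h=[\bar h]_0+\Lc Z$ and $X=X_0+\Delta_HZ$. With your choice $h=[\bar h]_0-\Lc Z$, the constant term is doubled instead of removed. Subtracting either your expression for $\mathcal L_Zg$ or the correct one does not give the stated bracket. Note also that $P_2$ terms do not disappear under the trace-free projection; only multiples of $g$ do.

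Once the sign is fixed, the bookkeeping of $\chi$ that you left open is short. You have $X_0=\xi+\frac12B(\bar h)^\sharp-\frac12\nabla u$, with the following pieces.
\begin{itemize}
\item $\xi-\frac12\nabla u=\frac{3(n-2)}{8(n-1)}\big(r^{1-p}-r^{-1-p}\big)\partial_r$. This follows from $r^dfu'$ and $\frac12-\frac{a_n}4=\frac{3(n-2)}{8(n-1)}$.
\item $\frac12B(\bar h)^\sharp=\big(-2fA'-\frac{2(p+1)}rfA-2pr^{-1-p}A+\frac12r^{-1-p}\big)\partial_r$, since $\tr\bar h=\frac1p$ is constant.
\item $\Delta_HZ=\frac1{4p}\nabla\Delta(r^2)=-fr^{-1-p}\partial_r=\big(r^{-1-2p}-r^{-1-p}\big)\partial_r$.
\end{itemize}
Adding these three pieces gives exactly the displayed $\chi$. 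With your sign, you would instead get $r^{-1-p}\big(\frac32-r^{-p}\big)$ in place of $r^{-1-p}\big(r^{-p}-\frac12\big)$.
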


For $n=4$, i.e.\ $p=1$, $A=\frac{r+1}8$ grows linearly, and we correct this with a gauge field
$\zeta(r)\partial_r$, $\zeta=-\frac{r^2}8+Br+C$, $\zeta(1)=0$. A direct computation yields a
one-parameter family of smooth solutions, and the choice $B=\frac18$ yields a tensor with constant
coefficients:
\begin{theorem}\label{thm:h4}
For $n=4$, with $r_0=2m$, the tensor
\[
h=\frac{3m^2}4\big(g-2r^2g_{S^2}\big)
\]
satisfies $\Delta_Lh=k+\Lc X$, with $X=-\frac{2m^2f}r\,\partial_r=-2m^2\nabla\ln r$.
\end{theorem}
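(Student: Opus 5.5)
The plan is to work with $r_0=1$ (so $m=\tfrac12$, $p=1$, $d=2$) and to recover the general case by the rescaling rule $h_{r_0}(r)=r_0^2h_1(r/r_0)$ of Section~\ref{sec:setting}. For $r_0=1$ the claim reads $h_1=\frac3{16}(g_1-2P_2)=\frac3{16}(P_0+P_1-P_2)$.

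I will produce $h_1$ in two stages. First I apply the output of Section~\ref{sec:h} with $p=1$. There $A=\frac{r+1}8$, $\alpha=\frac12-A$, $\beta=A$, and
\[
\bar h=(1-2A)P_0+2AP_1=\tfrac{3-r}{4}P_0+\tfrac{r+1}4P_1 .
\]
I take $S=-fP_0$ and the gauge $\xi=-\frac12 rf\,\partial_r$, for which the identity $2D\Ric(g)\bar h=S+\varphi g+\mathcal L_\xi g$ was derived. Proposition~\ref{prop:principle} then gives $\Delta_L[\bar h]_0=k+\Lc X_0$. The field $X_0$ is assembled from three pieces:
\begin{itemize}
\item the term $\frac12\nabla u$ coming from $\nabla^2u=\frac12\mathcal L_{\nabla u}g$ in $k$ (Lemma~\ref{lem:comm}(3)), the pure-trace parts of $k$ being harmless;
\item the field $\xi$;
\item the term $\frac12 B(\bar h)$, from Lemma~\ref{lem:ricci}.
\end{itemize}
Each piece is radial, so $X_0=\chi_0(r)\partial_r$ with $\chi_0$ explicit; in particular $u'$ is given by the formula $r^df\,u'=\dots$ of Section~\ref{subsec:source}.

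Second, I kill the linear growth of $A$ by adding the gauge term $\Lc(\zeta\partial_r)$ with $\zeta=-\frac{r^2}8+Br+C$. I will compute
\[
\Lc(\zeta\partial_r)=\Big[\tfrac{\zeta f'}{f}P_0+\big(2\zeta'-\tfrac{\zeta f'}f\big)P_1+\tfrac{2\zeta}rP_2\Big]_0
\]
using the formula for $\mathcal L_\xi g$ from the linearized-equations subsection. I will then check that $B=\frac18$, $C=0$ (consistent with $\zeta(1)=0$) makes $[\bar h]_0+\Lc(\zeta\partial_r)$ have constant frame components equal to those of $\frac3{16}(P_0+P_1-P_2)$. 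This is a short rational-function check with $f=1-1/r$. Concretely, $\zeta=-\frac{r(r-1)}8=-\frac{r^2f}8$, so $\zeta\partial_r=-\frac1{16}\nabla(r^2)$ is smooth and vanishes at the bolt. Smoothness of $h_1$ is then evident, since it is a constant combination of $g$ and $P_2$, and $P_2=r^2h_Y$ is smooth.

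By Lemma~\ref{lem:comm}(2), $\Delta_L\Lc(\zeta\partial_r)=\Lc\Delta_H(\zeta\partial_r)$. Since $\zeta\partial_r$ is a gradient and the metric is Ricci-flat, $\Delta_H$ reduces to the Hodge Laplacian of $-\frac1{16}d(r^2)$, namely $-\frac1{16}d\Delta(r^2)$, where $\Delta(r^2)=-(r^{-2})\big(r^2 f\,(r^2)'\big)'$ is elementary. Hence $\Delta_Lh_1=k+\Lc X$ with
\[
X=X_0-\tfrac1{16}\nabla\Delta(r^2).
\]

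The main obstacle, and the step I would do most carefully, is bookkeeping of $X$. I must collect $\frac12\nabla u$, $\xi$, $\frac12B(\bar h)$ (including the divergence of $\bar h$ in the warped frame), the trace correction hidden in the passage from $\mathcal L$ to $\Lc$, and $-\frac1{16}\nabla\Delta(r^2)$. I then verify that they sum to $-\frac fr\partial_r=-\frac12\nabla\ln r\cdot(4m^2)|_{m=1/2}$, i.e.\ $-2m^2\nabla\ln r$. As a safeguard I would cross-check by taking the divergence of both sides. Since $k$ is TT, $\dv\Delta_Lh_1=\dv\Lc X$, and because $\dv$ commutes suitably with $\Delta_L$ on Ricci-flat metrics, this determines the radial $X$ up to the kernel of $\dv\Lc$ on radial fields; the condition $X\to0$ with smoothness at $r=1$ fixes that ambiguity. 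Finally, rescaling to $r_0=2m$ multiplies $h_1$ by $r_0^2=4m^2$, giving $\frac{3m^2}4(g-2r^2g_{S^2})$. The frame components of $X$ scale the same way, so $-\frac fr\partial_r$ becomes $-\frac{2m^2f}{r}\partial_r$.
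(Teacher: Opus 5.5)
Your strategy is the paper's: take the $p=1$ output of Section~\ref{sec:h}, $\bar h=\frac{3-r}4P_0+\frac{r+1}4P_1$, and add $\Lc(\zeta\partial_r)$ with $\zeta=-\frac{r^2}8+\frac r8=-\frac{r(r-1)}8$. The $h$ half is correct. One finds $\bar h+\mathcal L_{\zeta\partial_r}g=(\frac58-\frac r4)g-\frac38P_2$, whose trace-free part is $\frac3{16}(g-2P_2)$.

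The $X$ half, however, does not close as you set it up, because of three errors.

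\emph{(i) Wrong gradient potential.} $\zeta\partial_r\neq-\frac1{16}\nabla(r^2)$. Since $\nabla(r^2)=2rf\,\partial_r=2(r-1)\partial_r$, that field is $-\frac{r-1}8\partial_r$. The correct potential is $w=-\frac{r^3}{24}$, because $fw'=\zeta$. Then $\Delta w=\frac{4r-3}8$ and $\Delta_H(\zeta\partial_r)=\nabla\Delta w=\frac f2\,\partial_r$. Your term $-\frac1{16}\nabla\Delta(r^2)$ equals $\frac f{4r^2}\partial_r$ instead.

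\emph{(ii) Wrong target at $r_0=1$.} There $X$ must be $-2m^2\nabla\ln r|_{m=1/2}=-\frac12\nabla\ln r=-\frac f{2r}\partial_r$, the value used in Section~\ref{sec:PH}. It is not $-\frac fr\partial_r$, so your displayed identity is off by a factor $2$.

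\emph{(iii) Wrong rescaling of $X$.} $X$ does not rescale like $h$. Let $\Phi$ be the homothety with $\Phi^*g_{r_0}=r_0^2g_1$. Then $\Phi^*h_{r_0}=r_0^4h_1$, $\Delta_L$ has order two and $\Lc$ order one, and together these force $X_{r_0}=\Phi_*X_1$. For the $\partial_r$-coefficient this reads $\chi_{r_0}(r)=r_0\,\chi_1(r/r_0)$, so frame components gain one factor $r_0$, not $r_0^2$. Starting from $-\frac fr\partial_r$ with any of these rules, you cannot reach $-\frac{2m^2f}r\partial_r$.

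With the corrections, your bookkeeping does go through. From $u'=-r-\frac12-\frac1{2r}$ you get $-\frac12\nabla u=\frac{(r-1)(2r^2+r+1)}{4r^2}\partial_r$. The other two pieces are $\xi=-\frac{r-1}2\partial_r$ and $\frac12B(\bar h)^\sharp=-\frac{3(r^2-1)}{4r^2}\partial_r$. Here $(\alpha+\beta)'=0$ and $\dv\bar h=-(\frac34+\frac3{4r})dr$. These sum to $X_0=-\frac{r^2-1}{2r^2}\partial_r$. Hence $X=X_0+\frac f2\partial_r=-\frac f{2r}\partial_r$, which rescales to $-\frac{r_0^2f}{2r}\partial_r=-\frac{2m^2f}r\partial_r$.

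Your divergence cross-check is harmless but not needed. Radial conformal Killing fields vanish, since $\frac{f'}f\neq\frac2r$, so $X$ is already uniquely determined by $\Lc X=\Delta_Lh-k$.
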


%====================================================================
\section{Second step: $\Delta_LH=h+\Lc\tilde X$}\label{sec:H}
%====================================================================

\subsection{The general case}
We apply the same method with $S=h+\mu g$, the representative with $S_2=0$:
\[
S_{01}=-4A+\frac{r^{-p}}p,\qquad S_{02}=-2A+\Big(\frac1p+\frac12\Big)r^{-p}.
\]
This time, we take $\xi=0$. The constraint \eqref{eq:constraint} gives $\sigma$ directly:
\begin{equation}\label{eq:ODEsigma}
\boxed{\;\sigma'=-\frac{r\,S_{01}}{2d\,f}\;}
\end{equation}
and \eqref{eq:12}--\eqref{eq:Lalpha} together with $S_{12}=S_{02}-S_{01}$ give the ODE
\begin{equation}\label{eq:ODE2}
\boxed{\;\mathcal L[\alpha]=R(r):=S_{02}+\Big(\frac{2f}r-f'\Big)\sigma'+\frac{4(d-1)}{r^2}\sigma\;}
\end{equation}
that is, $r^2f\alpha''+\big((d-2)rf+2r^2f'\big)\alpha'-2(d-1)\alpha=-\frac{r^2}2R$.

We first solve the constraint. Let
\[
J(r)=\int_1^r\frac{s(s^2-1)}{s^p-1}\,ds,\qquad M(r)=\int_1^r\frac{s^2-1}{s(s^p-1)}\,ds .
\]
Using \nv{$\frac{d}{ds}\big(s^a/D(s)\big)=a\,s^{a-1}/D-p\,s^{a+p-1}/D^2$}, integration of
\eqref{eq:ODEsigma} gives
\[
\sigma=-\frac{r^2(r^2-1)}{4dp\,D}+\frac J{dp}+\sigma_0 .
\]
For $p\ge4$, summing over the $p$-th roots of unity $\omega\ne1$,
\[
J=\frac1p\sum_\omega(\omega^4-\omega^2)\ln\frac{r-\omega}{1-\omega},\qquad
M=\ln r+\frac1p\sum_\omega(\omega^2-1)\ln\frac{r-\omega}{1-\omega},
\]
\nv{where $\ln$ is the principal determination: since $\mathrm{Re}\,(1-\omega)>0$, the quantity
$\frac{r-\omega}{1-\omega}=1+\frac{r-1}{1-\omega}$ has positive real part for $r\ge1$. The terms
associated with $\omega$ and $\bar\omega$ are complex conjugate, so that the sums are real.}
Special cases: $p=2$: $J=\frac{r^2-1}2$ and $M=\ln r$; $p=3$:
$J=r-1-\frac2{\sqrt3}\Big(\arctan\frac{2r+1}{\sqrt3}-\frac\pi3\Big)$; $p=4$:
$J=\frac12\ln\frac{r^2+1}2$.

We now solve the ODE for $\alpha$. With $c_d=\frac1{2d}$ one gets, after simplification,
\[
R=c_d\Big[(p+2)r^{-p}-3\frac{J'}r+\frac12\Big(\frac{r^3-r}D\Big)'+\frac{8J}{r^2}+\frac4{p\,r^2}\Big]
\qquad(\text{for }\sigma_0=\tfrac1{2dp^2}).
\]
Variation of constants (Lemma~\ref{lem:homog}) gives
\[
\alpha=-\frac1{2(p+2)D}\int_1^r\big(r^{p+2}-s^{p+2}\big)R(s)\,\frac{ds}s\ +\ \text{homogeneous}.
\]
The integrals are computed by parts; all terms in $J/s^3$ and $s^{p-1}J$ reduce to $J$, $M$ and
rational functions. One finds
\[
\alpha=-\frac {c_d}{2(p+2)}\,\frac{\mathcal N(r)-\mathcal N(1)}D-\frac{c_d(p+3)}{2p(p+2)}\alpha_2
+c_2\alpha_2+\tilde c,
\]
\begin{multline*}
\mathcal N(r)=\frac{p^2+7p+16}{8p}r^4-\frac{3(p^2+5p+8)}{4p}r^2-\frac{2(p+2)}{p^2}r^p\\
+\Big(\frac{p^2+7p+16}{2p}-\frac{4(p+2)}pr^p\Big)J+\frac32r^{p+2}M,
\end{multline*}
\[
\mathcal N(1)=-\frac{5p^3+23p^2+48p+32}{8p^2},\qquad
\tilde c=-\frac{2(p+2)}pc_2 ,
\]
\[
\sigma=-\frac{r^2(r^2-1)}{4dpD}+\frac J{dp}+\frac1{2dp^2}+\tilde c .
\]

Note that smoothness at the bolt is automatic: $\alpha(1)=0$ by construction and $\sigma(1)=0$;
the shift $\tilde c$ (rescaling of $\tau$) compensates for the addition of $c_2\alpha_2$.

As for the growth, we choose:
\begin{itemize}
\item $n=5$: $c_2=0$;
\item $n\ge6$: the choice $c_2=\frac{c_d(p+3)}{2p(p+2)}+\frac{3c_d\,M_\infty}{4(p+2)}$ kills the
$r^2$ term, with $M_\infty=\frac1p\big(\psi(1)-\psi(1-\frac2p)\big)$, $\psi$ the digamma function;
\item $n\ge8$: the remaining constant is removed by the gauge $Z=a_\infty\,rf\,\partial_r$, with
$a_\infty=\frac{J_\infty}{dp}+\frac1{2dp^2}+\tilde c$ and
$J_\infty=\frac1p\big(\psi(1-\frac2p)-\psi(1-\frac4p)\big)$.
\end{itemize}

At this stage we have proved:
\begin{theorem}\label{thm:H}
For $n\ge5$, the tensor
\[
H=2\alpha P_0+2(\sigma-\alpha)P_1-\frac{2\sigma}ng+\Lc Z
\]
is smooth, trace-free, and satisfies $\Delta_LH=h+\Lc\tilde X$ with
$\tilde X=-W-2p\,a_\infty r^{-p-1}f\,\partial_r$, where $W=-\frac12B(\bar H)$ for
$\bar H=2\alpha P_0+2(\sigma-\alpha)P_1$, that is
\[
W=f\Big[2r^{-d}\big(r^d(\sigma-\alpha)\big)'-\frac{f'}f(2\alpha-\sigma)-\sigma'\Big]\partial_r .
\]
Its growth is $O(r^2\ln r)$ for $n=5$, $O(r)$ for $n=6$, $O(\ln r)$ for $n=7$ and $O(r^{7-n})$
for $n\ge8$.
\end{theorem}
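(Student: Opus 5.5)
The proof of Theorem~\ref{thm:H} assembles the computations of this section through Proposition~\ref{prop:principle}, now with source $S=h+\mu g$, which is congruent to $h$ modulo $C^\infty(M)g$. The first task is to confirm that $\bar H=2\alpha P_0+2\beta P_1$, with $\beta=\sigma-\alpha$, satisfies $2\,D\Ric(g)\bar H=S+\varphi g$ for some function $\varphi$, here with $\xi=0$. Since an invariant diagonal tensor is determined modulo $g$ by its differences $S_{01},S_{12}$, it suffices to check \eqref{eq:01} and \eqref{eq:12} with $\xi=0$.
\begin{itemize}
\item Equation \eqref{eq:01} is exactly the constraint \eqref{eq:constraint} combined with \eqref{eq:ODEsigma}. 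One differentiates the closed form of $\sigma$, using $J'=s(s^2-1)/D$ and the quotient rule quoted before the formula, and recovers $-rS_{01}/(2df)$ with $S_{01}=-4A+r^{-p}/p$.
\item Equation \eqref{eq:12} is, by \eqref{eq:Lalpha}, equivalent to $\mathcal L[\alpha]=R$. One checks the simplified form of $R$ and then verifies the stated closed form of $\alpha$. The cleanest route is not to redo the variation of constants. Instead, apply $\mathcal L$ directly to $-\frac{c_d}{2(p+2)}(\mathcal N-\mathcal N(1))/D$: the homogeneous pieces $\alpha_2$ and constants drop out, up to the constant-shift bookkeeping absorbed in $\sigma$. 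Then use $J'$ and $M'=(r^2-1)/(r D)$ to eliminate all derivatives of the transcendental functions. The coefficients of $J$ and $M$ must cancel separately, leaving a rational identity.
\end{itemize}
The constant $\tilde c$ deserves a remark. Adding $\tilde c$ to both $\alpha$ and $\sigma$ is the rescaling of $\tau$, a source-free solution by Lemma~\ref{lem:homog}. The relation $\tilde c=-\frac{2(p+2)}pc_2$ is dictated by $\alpha(1)=0$, since $\alpha_2(1)=(p+2)/p$.

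Given this, Lemma~\ref{lem:ricci} gives
\[
\Delta_L\bar H=S+\varphi g+\tfrac12\mathcal L B(\bar H).
\]
Taking trace-free parts with Lemma~\ref{lem:comm}(1), and using that $\mathcal L Y$ and $\Lc Y$ differ by a multiple of $g$, yields $\Delta_L[\bar H]_0=h+\Lc(-W)$ with $W=-\frac12B(\bar H)$. The displayed formula for $W$ follows from a direct computation of $2\dv+d\tr$ on the radial tensor $2\alpha P_0+2\beta P_1$, whose trace is $2\sigma$.

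Next one adds $\Lc Z$, using $\Delta_L\Lc Z=\Lc\Delta_H Z$ from Lemma~\ref{lem:comm}(2). For $Z=a_\infty rf\,\partial_r=\frac{a_\infty}{2}\nabla(r^2)$ one has $\Delta_H Z=\nabla\Delta(r^2/2)$. Since $\Delta(r^2)=-r^{-d}(r^df\cdot 2r)'$ and $r^df=r^{d}-r$, this gives $\Delta(r^2)=-2(n-1)+2(d-1)r^{-p}\cdots$. The derivative yields the stated term $-2p\,a_\infty r^{-p-1}f\,\partial_r$, possibly up to normalisation, which must be checked. Trace-freeness holds by construction, since $[\bar H]_0=\bar H-\frac{2\sigma}n g$.

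Smoothness at the bolt uses the criterion of the Smoothness section. One needs $a(1)=b(1)$ for the frame components, that is $2\alpha(1)=2\beta(1)$, which follows from $\alpha(1)=\sigma(1)=0$. One also needs smoothness of $\alpha,\sigma$ in $r$ at $r=1$: $J,M$ are smooth because their integrands are, and $(\mathcal N-\mathcal N(1))/D$ is smooth because $D$ has a simple zero at $1$. Finally $Z$ and $\tilde X$ vanish at $r=1$ because of the factor $f$.

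The growth statement comes from the asymptotics of $J$ and $M$.
\begin{itemize}
\item $n=5$: $J\sim r^2/2$ and $M=\ln r$, so $r^{p+2}M/D$ gives $r^2\ln r$.
\item $n\ge6$: $M=M_\infty+O(r^{2-p})$ and $J\sim r^{4-p}/(4-p)$ or $\ln r$. The $r^2$ terms in $\mathcal N/D$ and from $\alpha_2$ cancel exactly for the chosen $c_2$; verifying that the digamma expressions give the limits of $M$ and $J$ is a standard partial-fraction computation. The remaining leading term is then $J$, of order $r^{4-p}$, giving $O(r)$ for $n=6$ and $O(\ln r)$ for $n=7$.
\item $n\ge8$: $J\to J_\infty$, and the constant limit $a_\infty$ of $\sigma$ (hence of $-\frac{2\sigma}{n}g$) is cancelled by $\Lc Z$. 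The remainders are then $O(r^{4-p})=O(r^{7-n})$.
\end{itemize}

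The main obstacle is the verification that the closed form of $\alpha$ solves $\mathcal L[\alpha]=R$, with the correct constants $\mathcal N(1)$ and the $\alpha_2$ coefficient. This is a long but mechanical identity, best organised by separating the coefficients of $J$, $M$ and the rational part.
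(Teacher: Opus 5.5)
Your route is the paper's: Proposition~\ref{prop:principle} with $S=h+\mu g$ and $\xi=0$, the constraint giving $\sigma$, \eqref{eq:ODE2} giving $\alpha$, the trace-free projection producing $\Lc(-W)$, then the gauge $Z$. Checking the closed form of $\alpha$ by applying $\mathcal L$, instead of redoing the variation of constants, is only a cosmetic change.

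The bolt step, however, is wrong as written for $n\ge6$. The relation $\tilde c=-\frac{2(p+2)}{p}c_2$ is not dictated by $\alpha(1)=0$: since $\alpha_2(1)=\frac{p+2}{p}$, that condition would give $\tilde c=-\frac{p+2}{p}c_2$. Here are the actual values at $r=1$.
\begin{itemize}
\item The first term of $\sigma$ equals $-\frac1{2dp^2}$ there, so $\sigma(1)=\tilde c$.
\item The base part of $\alpha$ vanishes there (this uses $\mathcal N'(1)=-\frac{(p+2)(p+3)}{p}$), so $\alpha(1)=\frac{p+2}{p}c_2+\tilde c$.
\end{itemize}
With the paper's $\tilde c$ and $c_2>0$, this gives $\alpha(1)=-\frac{p+2}{p}c_2\neq0$ and $\sigma(1)=-\frac{2(p+2)}{p}c_2\neq0$. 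So your premise $\alpha(1)=\sigma(1)=0$ is false. Had you actually imposed $\alpha(1)=0$, you would get $\beta(1)=\tilde c\neq\alpha(1)=0$, that is, a conical singularity at the bolt.

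The condition to use is the bolt criterion itself, $\alpha(1)=\beta(1)$, i.e.\ $2\alpha(1)=\sigma(1)$. It holds for $c_2=\tilde c=0$. Adding $c_2\alpha_2$ to $\alpha$ and $\tilde c$ to both $\alpha$ and $\sigma$ preserves it if and only if $\frac{2(p+2)}{p}c_2+2\tilde c=\tilde c$, which is exactly the paper's relation. This is the content of ``the shift $\tilde c$ compensates for the addition of $c_2\alpha_2$''.

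For the same reason, $W(1)=0$ is not due to the prefactor $f$, since $f\cdot\frac{f'}{f}(2\alpha-\sigma)=f'(2\alpha-\sigma)$. It holds because $2\alpha(1)=\sigma(1)$, equivalently because $\bar H$ is smooth.

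Two smaller slips.
\begin{itemize}
\item From $(r^{d+1}-r^2)'=(d+1)r^d-2r$ one gets $\Delta(r^2)=-2(n-1)+4r^{-p}$, not a term $2(d-1)r^{-p}$. Hence $\Delta_HZ=a_\infty\nabla(2r^{-p})=-2p\,a_\infty r^{-p-1}f\,\partial_r$ exactly, with no normalisation left to check.
\item For $n\ge8$, $\Lc Z\to-2a_\infty[P_0]_0$ at infinity. It therefore removes the constant part of $[\bar H]_0$ only if $\alpha$ also tends to $a_\infty$ (so that $\beta\to0$), not just $\sigma$. This follows from the $r^{-2}$ balance $\frac{4(d-1)}{r^2}(\alpha_\infty-\sigma_\infty)=0$ in $\mathcal L[\alpha]=R$, all other terms being $O(r^{2-p})$ once the $r^2$ mode is killed. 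You should state it.
\end{itemize}
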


\begin{remark}
For $n=4$ ($p=1$), the formulas of Theorem~\ref{thm:H} still make sense, with
$J=\frac{r^3-1}3+\frac{r^2-1}2$, $M=r-1+\ln r$, $c_2=0$ and $Z=0$, and give a smooth $H$ with
$\Delta_LH=h+\Lc\tilde X$, where $h$ is the tensor of Theorem~\ref{thm:h} with $p=1$. However
this $h$ grows like $r$ (since $A=\frac{r+1}8$) and $H$ grows like $r^3$. This would still be
sufficient for Theorem~\ref{thm:main} (since $3<\frac72$), but we prefer a tensor $H$ growing
only like $r^2\ln r$.
\end{remark}

\subsection{Closed forms in dimensions $4$ and $5$}
For $n=4$ we therefore start from the tensor $h$ of Theorem~\ref{thm:h4}, and we use the gauge
$H|_Y=0$, which also works for $n=5$. This gauge, i.e.\ $\zeta=-r^2\int_1^r\sigma/t^2$, gives a
single function: $H=\phi_H(r)\,(P_0-P_1)$ with $\phi_H=2\alpha+\big(\frac{f'}f-\frac2r\big)\zeta$.

In these two dimensions, we use another representative of the source modulo
$\operatorname{Im}\mathcal L+C^\infty g$, for which $S_{01}=0$; the constraint then gives
$\sigma=c_0$ constant, and \eqref{eq:ODE2} reduces to $\mathcal L[\alpha]=S_{02}+4(d-1)c_0/r^2$.

For $n=4$, the source $h=\frac3{16}(g-2P_2)\equiv-\frac38P_2$ gives $S_{01}=0$,
$S_{02}=\frac38$, and the ODE
\[
r(r-1)\alpha''+2\alpha'-2\alpha=-\frac3{16}r^2-2c_0,
\]
which is resonant with $\alpha_2=r^2+r+1$; this produces a logarithm:
$\alpha=-\frac{r^3\ln r}{16(r-1)}+c_0$ with $c_0=\frac18$ by smoothness. We obtain:
\begin{theorem}\label{thm:H4}
In dimension $4$ (with $r_0=1$), the tensor
\[
H=\frac18\Big(2r-1-\frac{r^3\ln r}{r-1}\Big)\,(P_0-P_1)=O(r^2\ln r)
\]
is smooth and solves $\Delta_LH=h+\Lc\tilde X$, where $h$ is the tensor of
Theorem~\ref{thm:h4} and $\tilde X=\tilde\chi(r)\,\partial_r$ with
\[
\tilde\chi=-\frac12\,r\ln r-\frac r8+\frac12-\frac1{2r}+\frac1{8r^2}.
\]
Moreover $\tilde\chi(1)=0$, so that $\tilde X$ is smooth at the bolt.
\end{theorem}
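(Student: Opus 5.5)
The proof follows the method of Proposition~\ref{prop:principle}, with $r_0=1$ and $n=4$, so that $p=1$, $d=2$, $f=1-1/r$, $D=r-1$. The source is the tensor $h=\frac3{16}(g-2P_2)$ of Theorem~\ref{thm:h4}. Modulo $C^\infty g$ it equals $S=-\frac38P_2+\frac38 g$. It is more convenient to use the representative $S=\frac38(P_0+P_1)$, up to the multiple of $g$, which gives $S_{01}=0$ and $S_{02}=\frac38$. The trace part only changes $\varphi$, and taking trace-free parts at the end is legitimate by Lemma~\ref{lem:comm}.

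\emph{Solving the system.} We take $\xi=0$ and $\bar h=2\alpha P_0+2\beta P_1$.
\begin{itemize}
\item Equations \eqref{eq:01} and \eqref{eq:constraint} force $\sigma=\alpha+\beta=c_0$ to be constant.
\item Equation \eqref{eq:Lalpha} then gives $\mathcal L[\alpha]=\frac38+\frac{4c_0}{r^2}$. After multiplying by $-r^2/2$, this is the stated ODE $r(r-1)\alpha''+2\alpha'-2\alpha=-\frac3{16}r^2-2c_0$.
\item For $p=1$, Lemma~\ref{lem:homog} gives the homogeneous solutions $\alpha_1=1/(r-1)$ and $\alpha_2=(r^3-1)/(r-1)=r^2+r+1$.
\item Since $r^2$ appears in $\alpha_2$, the forcing is resonant. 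I would verify by direct substitution that $\alpha=c_0-\frac{r^3\ln r}{16(r-1)}$ is a particular solution. This is routine: write $\alpha=c_0+\psi$, note that the constant handles the $-2c_0$ term, and differentiate $\psi$.
\end{itemize}

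\emph{Smoothness and the gauge.} Near $r=1$ we have $\frac{r^3\ln r}{r-1}=1+O(r-1)$, so $\alpha(1)=c_0-\frac1{16}$ and $\beta(1)=\frac1{16}$. The bolt condition $\alpha(1)=\beta(1)$ then gives $c_0=\frac18$. Next we apply the gauge $\mathcal L_\zeta g$ with $\zeta=-r^2\int_1^r\sigma/t^2\,dt=-\frac18(r^2-r)$. This kills the $P_2$ component, which is allowed since $D\Ric(\mathcal L_\zeta g)=0$, and $\zeta(1)=0$. The result is a multiple of $P_0-P_1$ with coefficient $\phi_H=2\alpha+(\frac{f'}f-\frac2r)\zeta$. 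With $f'/f=\frac1{r(r-1)}$, a short computation gives $(\frac{f'}f-\frac2r)\zeta=-\frac18+\frac{r-1}{4}\cdot\frac{r}{r}$. Combined with $2\alpha=\frac14-\frac{r^3\ln r}{8(r-1)}$, this should reproduce $\frac18(2r-1-\frac{r^3\ln r}{r-1})$. The tensor is already trace-free, and its coefficient vanishes at $r=1$, which gives smoothness at the bolt. The growth $O(r^2\ln r)$ is read off directly.

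\emph{The vector field $\tilde X$.} By Lemma~\ref{lem:ricci}, $\Delta_LH=2D\Ric(g)H+\frac12\mathcal LB(H)$, so $\tilde X=-\frac12B(H)$ (identified with a vector field) plus whatever gauge vector comes from the $\varphi g$ term. The latter is absorbed because $\Lc$ takes the trace-free part, and $\frac12\mathcal L_Y g$ becomes $\frac12\Lc Y$ modulo $g$. Since $H=\phi_H(P_0-P_1)$ is trace-free, $B(H)=2\dv H$. Its only component is radial, and the divergence formula for diagonal invariant tensors gives $\tilde\chi$ in terms of $\phi_H$, $\phi_H'$ and $f$. Expanding then yields the stated $\tilde\chi$, and $\tilde\chi(1)=0$ is checked at the end.

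I expect this last bookkeeping to be the main obstacle. One must track the factor conventions in $\mathcal L$ versus $\mathcal L_X g$ and the sign of $\dv$, and confirm that the $h$-representative shift ($g$ versus $P_2$ terms) contributes nothing beyond a multiple of $g$. If the expansion does not match, I would recompute $\dv H$ directly in the orthonormal frame, using $\nabla e_\tau$ and $\nabla e_r$ from the warped product structure.
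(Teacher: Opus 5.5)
Your construction of $H$ is correct and is the paper's own route. The steps all match: the representative $-\frac38P_2$ (so $S_{01}=0$, $S_{02}=\frac38$), then $\sigma=c_0$, the resonant ODE, and the particular solution $c_0-\frac{r^3\ln r}{16(r-1)}$. That solution works because $\frac{r^3}{r-1}=\alpha_1+\alpha_2$ is itself homogeneous. The same holds for $c_0=\frac18$, $\zeta=-\frac18(r^2-r)$ and $\phi_H=\frac18\big(2r-1-\frac{r^3\ln r}{r-1}\big)$. This proves $\Delta_LH=h+\Lc Y$ for \emph{some} smooth radial $Y$. The gap is the identification of $Y$ with the stated $\tilde X$, which is part of the statement.

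There are two errors here. First, a sign: in Lemma~\ref{lem:ricci}, $\frac12\mathcal L(B)=\mathcal L_{\frac12B}g$, so the field is $+\frac12B$, not $-\frac12B$.

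Second, and more seriously, the equation you solved, $2D\Ric(g)(\cdot)=h+\varphi g$, holds for $\hat H:=\bar H+\mathcal L_\zeta g$, not for its trace-free part $H$. The $\varphi g$ term contributes no vector. The gauge $\zeta$ does not kill the $P_2$ component of $\hat H$ (that component is $2\zeta/r$); it kills the $P_2$ component of $[\hat H]_0$, and $\tr\hat H=8\zeta/r=-(r-1)$. So $H=\hat H-\psi g$ with $\psi=-\frac{r-1}4$. Since
\[
2D\Ric(g)(\psi g)=(\Delta\psi)\,g-(n-2)\nabla^2\psi ,
\]
the Hessian term lies in $\operatorname{Im}\mathcal L$, not in $C^\infty g$. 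Applying Lemma~\ref{lem:ricci} to $\hat H$ instead gives
\[
\tilde X=\tfrac12B(\hat H)=\dv H+\tfrac{n-2}2\nabla\psi=\dv H-\tfrac f4\,\partial_r .
\]
Equivalently, $\tilde X=\frac12B(\bar H)+\Delta_H(\zeta\partial_r)$ with $\Delta_H(\zeta\partial_r)=\frac f2\partial_r$, which is the $-W$ plus gauge term of Theorem~\ref{thm:H}.

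Concretely, $(\dv H)^r=fr^{-2}(r^2\phi_H)'+f'\phi_H=\frac34-\frac3{4r}+\frac1{8r^2}-\frac12r\ln r-\frac r8$. So $(\dv H)^r-\frac f4$ is exactly $\tilde\chi$, while your $-(\dv H)^r$ is not. Moreover, $\Lc(\xi\,\partial_r)=0$ forces $\xi\big(\frac{f'}f-\frac2r\big)=0$, hence $\xi=0$, so a radial $\tilde X$ is uniquely determined. Your recipe therefore produces a wrong $\tilde\chi$, and your fallback of recomputing $\dv H$ in a frame would not fix it. What is missing is the gradient of the trace of $\hat H$.
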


\begin{proof}[Comments]
$H$ is smooth at the bolt since $\frac{\ln r}{r-1}$ is analytic at $r=1$ and the coefficient of
$P_0-P_1$ vanishes at $r=1$ (it equals $-\frac{r-1}{16}+O((r-1)^2)$).
\end{proof}

Although it is not needed, let us also look at the tensor $H$ obtained in this way when $n=5$.
Since $A\equiv\frac18$, we have $h\equiv\bar h=\frac14(P_0+P_1)$ (Theorem~\ref{thm:h} and the
gauge $Z$), so $S_{01}=0$, $S_{02}=\frac14$, and the ODE
\[
(r^2-1)\alpha''+\Big(r+\frac3r\Big)\alpha'-4\alpha=-\frac{r^2}8-4c_0,
\]
with homogeneous solutions $\frac1{r^2-1}$ and $r^2+1$, gives
$\alpha=-\frac{r^4\ln r}{32(r^2-1)}+c_0$, $c_0=\frac1{32}$. Then the tensor
\[
H=\frac1{16}\Big(r-\frac1{r+1}-\frac{r^4\ln r}{r^2-1}\Big)\,(P_0-P_1)=O(r^2\ln r)
\]
is smooth (its coefficient equals $-\frac{r-1}{64}+O((r-1)^2)$) and solves
$\Delta_LH=h+\Lc\tilde X$ with $\tilde X=\tilde\chi(r)\,\partial_r$,
\[
\tilde\chi=-\frac5{16}\,r\ln r-\frac5{16}\,r+\frac5{32}+\frac1{4r}-\frac5{32\,r^2}+\frac1{16\,r^3},
\]
and $\tilde\chi(1)=0$, so that $\tilde X$ is smooth at the bolt.

\begin{remark}
The gauge $H|_Y=0$ is compatible with the allowed growth only when $n\le6$.
\end{remark}

%====================================================================
\section{Consequence: $P(H)=k$}\label{sec:PH}
%====================================================================
For $\Ric(g)=0$, the operator of the TT-tensor machine \cite{DelTT} is
\[
P=\Delta_L^2-\Lc\,Q\,\dv,\qquad Q=d^*d+\frac n{2(n-1)}dd^* .
\]
It maps trace-free symmetric tensors to TT tensors, and $P\Lc=0$.

\begin{theorem}\label{thm:PH}
Let $H$ be the tensor of Theorem~\ref{thm:H} for $n\ge5$, or of Theorem~\ref{thm:H4} for $n=4$.
Then $P(H)=k$.
\end{theorem}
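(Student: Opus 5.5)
We compute $P(H)=\Delta_L^2H-\Lc Q\dv H$ directly from the two equations already established. By Theorem~\ref{thm:H} (or Theorem~\ref{thm:H4} for $n=4$) we have $\Delta_LH=h+\Lc\tilde X$. Applying $\Delta_L$ and using Lemma~\ref{lem:comm}(2), $\Delta_L\Lc=\Lc\Delta_H$, we get
\[
\Delta_L^2H=\Delta_Lh+\Lc\Delta_H\tilde X=k+\Lc\big(X+\Delta_H\tilde X\big),
\]
by Theorem~\ref{thm:h} (or Theorem~\ref{thm:h4}). Hence $P(H)=k+\Lc Y$ with $Y:=X+\Delta_H\tilde X-Q\dv H$, a smooth radial vector field $Y=y(r)\partial_r$. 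Everything is invariant under $U(1)\times O(n-1)$ and the reflection, so $Y$ is indeed radial. The goal is to show $\Lc Y=0$, i.e.\ $Y=0$ (a radial conformal Killing field on this manifold must vanish).

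We avoid computing $Y$ by hand. Since $P$ maps trace-free tensors to TT tensors and $k$ is TT, $\Lc Y=P(H)-k$ is TT, so $\dv\Lc Y=0$. On a Ricci-flat manifold $\dv\Lc$ is, up to sign conventions, the elliptic operator $\Delta_H+(1-\tfrac2n)dd^*$, i.e.\ essentially $Q$ up to a positive factor on the two pieces. For a radial field this reduces to a second-order linear ODE for $y(r)$, with a regular singular point at the bolt and at infinity. Smoothness at the bolt forces $y(1)=0$ (Section on smoothness), which selects a one-dimensional space of local solutions near $r=1$. At infinity the characteristic exponents of the ODE are those of radial solutions of the flat operator on $S^1\times\mathbb R^{n-1}$, namely $y\sim r$ (dilation-like) and $y\sim r^{2-n}$ (Green-like).

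The remaining task, which is the main obstacle, is to exclude the nonzero solution. We integrate by parts on $\{r\le R\}$: $0=\int\langle\dv\Lc Y,Y\rangle=\tfrac12\int|\Lc Y|^2+\text{boundary}$. The boundary term at the bolt vanishes since $y(1)=0$. At $r=R$ it is controlled by the growth of $Y$. The growth bounds of the construction give an a priori bound on $Y$: $X$, $\tilde X$ and $\dv H$ are built from $h,H$ with at most the stated polynomial/log growth, so $y=O(r^{N})$ for some explicit $N$. However, this only rules out growing solutions if $N<1$, which need not hold. If the bound is insufficient, I would instead solve the radial ODE explicitly. For a radial field, $d^*Y=-r^{-d}(r^dy)'$ and $\Delta_H Y=dd^*Y$ (radial 1-forms are closed: $Y^\flat=f^{-1}y\,dr$), so $\dv\Lc Y=0$ becomes $d(d^*Y)=0$. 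Thus $d^*Y=c$ constant and $r^dy=-\tfrac{c}{n-1}(r^{n-1}-1)+c'$. Smoothness gives $c'=0$, so $Y=c\,\tfrac{-1}{n-1}(r-r^{2-n})\partial_r$.

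Finally, $\Lc Y$ for this $Y$ is not zero unless $c=0$; one checks directly from the formula for $\mathcal L_\xi g$ that its $P_0$ and $P_2$ components differ. So we must show $c=0$, which amounts to computing one number: $d^*Y=c$, evaluated at the bolt or asymptotically. I would evaluate $\tr\Lc Y$-free information via the $P_0-P_2$ component of $P(H)-k$ at infinity. Equivalently, I would compute $c=d^*Y(1)$ from local expansions of $X,\tilde X,H$ near $r=1$, which are explicit rational/log expressions there. This single constant check, carried out case by case ($n=4$, $n=5$, $n\ge6$ with the general formulas), is where I expect the real work.
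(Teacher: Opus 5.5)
\textbf{The proposal has a genuine gap: the decisive constant is never shown to vanish.} Your reduction is correct and matches the last part of the paper's argument. You have $P(H)=k+\Lc Y$ with $Y=X+\Delta_H\tilde X-Q\dv H$ radial, and $\dv\Lc Y=0$ because $P(H)$ and $k$ are TT. On a Ricci-flat manifold $\dv\Lc=d^*d+\frac{2(n-1)}n dd^*$, and a radial $1$-form is closed, so $d^*Y=c$ is constant. Smoothness at the bolt then gives $y=-\frac{c}{n-1}(r-r^{2-n})$.

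But the whole content of the theorem is the vanishing of $c$, and you do not prove it. You announce a case-by-case evaluation of $d^*Y(1)$ from Taylor expansions at the bolt. For general $n$ this would need high-order Taylor coefficients of $\alpha,\sigma$, hence of $J,M$, through $Q\dv H$ and $\Delta_H\tilde X$, and it is not carried out. The alternative you mention, an unspecified computation at infinity, is also left undone, and you dismiss the growth argument as possibly insufficient. As written, the proof stops at the decisive step.

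\textbf{How the paper closes it.} Apply $d^*\dv$ to $\Delta_Lh=k+\Lc X$ and to $\Delta_LH=h+\Lc\tilde X$, using $\dv\Delta_L=\Delta_H\dv$, $d^*\Delta_H=\Delta d^*$, $d^*Q=\frac n{2(n-1)}\Delta d^*$ and $d^*\dv\Lc=\frac{2(n-1)}n\Delta d^*$. The contributions of $\tilde X$ and $H$ cancel, leaving $d^*Y=-\vartheta$ with $\vartheta=\frac n{2(n-1)}d^*\dv h-d^*X$. Since $k$ is TT, $\vartheta$ is harmonic. Being radial and smooth at the bolt, it is constant, and it tends to $0$ at infinity by the decay of $h$ and $X$.

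\textbf{Your own asymptotic idea also works.} Your worry that ``$N<1$ need not hold'' is unfounded. $H$ has symbol-type behavior (Corollary~\ref{cor:types}), so each derivative gains a factor $r^{-1}$. Since $P=\sum_{j\le4}A_j\ast\nabla^j$ with $A_j$ of type $(j-4,0)$, you get $P(H)-k=O(r^{a_H-4}(\ln r)^m)\to0$, because $a_H\le2$. On the other hand, for $c\neq0$ the frame components of $\Lc Y$ tend to those of $\frac{2c}{n-1}[P_0]_0\neq0$. Hence $c=0$, and this route needs no information on $X$ or $\tilde X$. One of these two arguments has to be supplied; without it the proposal is only a reduction.
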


\begin{proof}
By Lemma~\ref{lem:comm}, $\Delta_L^2H=k+\Lc(X+\Delta_H\tilde X)$, hence $P(H)=k+\Lc W_0$ with
$W_0=X+\Delta_H\tilde X-Q\dv H$. Since $P(H)$ and $k$ are TT, $\dv\Lc W_0=0$. Applying $d^*\dv$ to
both equations and using $\dv\Delta_L=\Delta_H\dv$ and $d^*\dv\Lc=\frac{2(n-1)}n\Delta d^*$, one
gets $d^*W_0=-\vartheta$, where $\vartheta=\frac n{2(n-1)}d^*\dv h-d^*X$ is harmonic.
\nv{Since $\vartheta$ is radial, $r^df\vartheta'$ is constant; this constant vanishes since
$\vartheta$ is smooth at the bolt, where $f=0$. Hence $\vartheta$ is constant. Moreover
$d^*\dv h$ and $d^*X$ tend to $0$ at infinity: $h$ has two derivatives decaying at least like
$r^{-2}$ (Corollary~\ref{cor:types}), and $d^*X=-r^{-d}(r^d\chi)'=O(r^{-p})$ since
$\chi=O(r^{1-p})$. Thus $\vartheta\equiv0$.}

\nv{All the tensors and fields involved ($k$, $h$, $H$, $X$, $\tilde X$) are built from $r$,
$P_0$, $P_1$, $P_2$ and radial fields, hence are invariant under $U(1)\times O(n-1)$ and under the
reflection $\tau\mapsto-\tau$, and so is $W_0$. An invariant $1$-form is of the form
$a(r)\,dr+b(r)\,d\tau$, and the reflection forces $b=0$ (note that $\kappa=f\,d\tau$ is invariant
under $U(1)\times O(n-1)$ but odd under the reflection).} Thus the $1$-form $W_0$ is
radial, hence closed, and it is co-closed; thus $W_0=c\,(r^df)^{-1}dr$, which is smooth at the bolt
only if $c=0$. Hence $W_0=0$.

The same proof applies to the closed forms of Section~\ref{sec:H} for $n=4,5$, since $h$, $H$,
$X$ and $\tilde X$ are smooth at the bolt. For $n=4$ \nv{(with $r_0=1$, i.e.\ $m=\frac12$, so that
$h=\frac3{16}(g-2P_2)$ and $X=-\frac f{2r}\partial_r$)} one even checks directly that
$\vartheta\equiv0$: $\frac n{2(n-1)}d^*\dv h=d^*X=\frac1{2r^2}$.
\end{proof}

%====================================================================
\section{Weighted Sobolev spaces}\label{sec:sobolev}
%====================================================================

We use weighted Sobolev spaces in the spirit of Bartnik \cite{Bar86}. Throughout this section,
$T$ denotes any tensor bundle over $M$ and $N=n-1$. Recall that $r\ge1$ on $M$ and that $r$
is a smooth function on $M$, including at the bolt (since $r-1$ is a smooth function of
$\varrho^2$); we use it as a weight. Since
$d\mu_g=r^{n-2}\,d\tau\,dr\,d\mu_{h_Y}$, the volume of $\{r\le R\}$ grows like $R^N$; $N$ plays the
role of the dimension at infinity.

\begin{definition}
For $q\in\mathbb N$ and $\delta\in\mathbb R$, let $\Hs^q_\delta=\Hs^q_\delta(M,T)$ be the space of
sections $u\in H^q_{\rm loc}$ such that
\[
\|u\|_{q,\delta}^2:=\sum_{j=0}^q\int_M|\nabla^ju|^2\,r^{2(j-\delta)-N}\,d\mu_g<\infty .
\]
\end{definition}

This is a Hilbert space. Roughly speaking, $u\in \Hs^q_\delta$ means that
$\nabla^ju=o(r^{\delta-j})$ in an $L^2$ sense. In particular $\Hs^0_{-N/2}=L^2$, and the energy
space
\[
\mathcal W:=\Hs^1_{\delta_0},\qquad \delta_0=\frac{3-n}2,\qquad
\|u\|_{\mathcal W}^2=\int_M\big(|\nabla u|^2+r^{-2}|u|^2\big)\,d\mu_g,
\]
is the natural domain of $\mathcal Q$.

\subsection{Tensors with symbol-type behavior}

\begin{definition}
Let $a\in\mathbb R$ and $m\in\mathbb N$. A smooth tensor field $u$ on $M$ is of
\emph{type $(a,m)$} if for every $j\ge0$ there is $C_j$ with
\[
|\nabla^ju|\le C_j\,r^{a-j}(1+\ln r)^m\qquad\text{on }M .
\]
\end{definition}

\begin{lemma}\label{lem:type}
\begin{enumerate}
\item The tensors $g$, $e_\tau\otimes e_\tau$, $e_r\otimes e_r$ and $P_2$ are of type $(0,0)$, and
$r$ is of type $(1,0)$.
\item If $\phi(r)$ is a smooth function on $[1,\infty)$ with
$|\phi^{(i)}|\le C_i\,r^{a-i}(1+\ln r)^m$ for all $i$, then $\phi$ (seen as a function on $M$) is
of type $(a,m)$.
\item Products of tensors of types $(a,m)$ and $(b,l)$ are of type $(a+b,m+l)$.
\item The curvature tensor is of type $(-p-2,0)$ \nv{(recall that $h_Y$ is the round metric)}.
\end{enumerate}
\end{lemma}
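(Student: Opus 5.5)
We prove the four items in order, reducing everything to estimates in the orthonormal frame $e_\tau,e_r,r^{-1}\epsilon_i$ and to the behaviour of the Christoffel symbols of $g$ in that frame. Two regions must be handled separately. On a compact neighbourhood $\{r\le 2\}$ of the bolt, every smooth tensor satisfies all the bounds trivially, since $r$ and $1+\ln r$ are bounded above and below there. So it suffices to check smoothness there and to prove the decay estimates on $\{r\ge2\}$.

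For item (1), smoothness of $e_\tau^2$, $e_r^2$ and $P_2$ near the bolt follows from the smoothness criterion of the paragraph ``Smoothness at the bolt'', because $e_\tau^2$ and $e_r^2$ are not smooth separately but their sum is. To be safe, I would replace the pair $\{e_\tau^2,e_r^2\}$ by the smooth tensors $P_0+P_1=g-P_2$ and by $(r-1)\cdot$smooth combinations; alternatively, I would state item (1) only on $\{r\ge2\}$, which is all that is needed. On $\{r\ge2\}$, I would compute the connection coefficients of the frame. The nonzero ones are $\langle\nabla_{e_\tau}e_\tau,e_r\rangle=-\tfrac12 f^{-1/2}f'$, $\langle\nabla_{e_i}e_i,e_r\rangle=-\sqrt f/r$, and the sphere connection scaled by $1/r$. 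All of these are $O(r^{-1})$, and each $e_r$-derivative gains a factor $r^{-1}$ because $f$, $\sqrt f$ and $f^{-1/2}$ are functions of $r$ of type $(0,0)$ in the sense of item (2). Then $\nabla(e_\tau^2)$ and $\nabla P_2$ are sums of products of these coefficients with frame tensors, and $\nabla g=0$. An induction on $j$ gives $|\nabla^j e_\tau^2|\le C_jr^{-j}$, using that $e_\tau,e_r,\epsilon_i/r$ are parallel up to $O(r^{-1})$ coefficients whose derivatives again lose one power of $r$. Finally, $r$ is of type $(1,0)$ because $dr=\sqrt f\,e_r^\flat$ is bounded, and higher derivatives follow from the same induction.

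For item (2), the key formula is $\nabla\phi=\phi'\,dr$. Iterating, $\nabla^j\phi$ is a sum of terms $\phi^{(i)}\cdot T_{i,j}$, where $T_{i,j}$ is built from $i$ factors of $dr$ and from $\nabla^{l}dr$ with total derivative count $j-i$. Item (1) gives $|\nabla^l dr|\le Cr^{-l}$, since $dr$ is a frame tensor times $\sqrt f$. Each term is therefore bounded by $r^{a-i}(1+\ln r)^m\cdot r^{-(j-i)}$, which is the required estimate. Near the bolt, smoothness of $\phi$ on $M$ holds because $r$ is a smooth function on $M$.

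Item (3) is the Leibniz rule $\nabla^j(u\otimes v)=\sum\binom{j}{i}\nabla^iu\otimes\nabla^{j-i}v$, combined with $|\cdot|$ being submultiplicative under tensor products and contractions.

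For item (4), I would use the explicit curvature of the warped product. In the orthonormal frame, all components of $R$ are combinations of $f''$, $f'/r$ and $(1-f)/r^2$, each of which equals a constant times $r^{-p-2}$. Hence $R=\sum\phi_k(r)\,E_k$, where the $\phi_k$ are constant multiples of $r^{-p-2}$ and the $E_k$ are products of $g$, $e_\tau^2$, $e_r^2$ and $P_2$, for instance $e_r^\flat\wedge e_\tau^\flat$ squared and $P_2\owedge P_2$. Items (1) to (3) then finish the argument.

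The main obstacle is the induction in item (1): controlling all higher covariant derivatives of the frame tensors uniformly. It works because every connection coefficient is a function of $r$ of type $(-1,0)$ times a frame tensor, so the class ``functions of $r$ of type $(a,0)$ times polynomials in the frame tensors'' is stable under $\nabla$ with $a\mapsto a-1$.
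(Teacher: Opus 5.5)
Your proof is correct and follows the paper's argument. Both use the orthonormal frame $e_\tau,e_r,r^{-1}\epsilon_i$, with connection coefficients of symbol type $O(r^{-1})$ and an induction for item~1, the Leibniz/chain rule with $|\nabla^i r|\le C_i r^{1-i}$ for items~2--3, and curvature frame components proportional to $r^{-p-2}$ for item~4.

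Your caveat about the bolt is right, and the paper's proof passes over it: its bound $f'/\sqrt f=O(r^{-p-1})$ only holds away from $r=1$. By the smoothness criterion, $e_\tau\otimes e_\tau$ and $e_r\otimes e_r$ are \emph{not} smooth at the bolt, so your opening sentence on this point has it backwards. Item~1 for these two tensors must therefore be read on $\{r\ge2\}$, with smooth tensors handled by compactness on $\{r\le2\}$, exactly as you propose.
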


\begin{proof}
In the frame $\{e_\tau,e_r,r^{-1}\epsilon_i\}$, with $\epsilon_i$ an orthonormal frame of $h_Y$,
the connection coefficients are smooth functions of $r$ and of the point of $Y$, of the form
$O(r^{-1})$ together with all their frame derivatives multiplied by the corresponding powers of
$r^{-1}$: indeed $\frac{f'}{\sqrt f}=O(r^{-p-1})$, $\frac{\sqrt f}r=O(r^{-1})$, and the
connection of $r^2h_Y$ scales like $r^{-1}$. Point 1 follows by induction on $j$, and points 2
and 3 follow from the Leibniz rule and $|\nabla^i r|\le C_ir^{1-i}$. Point 4 holds because the
frame components of the curvature are $O(r^{-p-2})$ functions of $r$ of symbol type.
\end{proof}

\begin{corollary}\label{cor:types}
The tensors $k$, $h$ and $H$ are of type $(a,m)$ with:

\begin{center}
\begin{tabular}{c|c|c|c}
 & $k$ & $h$ & $H$\\\hline
$n=4$ & $(-2,0)$ & $(0,0)$ & $(2,1)$\\
$n=5$ & $(-2,0)$ & $(0,0)$ & $(2,1)$\\
$n=6$ & $(-3,0)$ & $(-1,0)$ & $(1,1)$\\
$n=7$ & $(-4,0)$ & $(-2,1)$ & $(0,1)$\\
$n\ge8$ & $(3-n,0)$ & $(5-n,1)$ & $(7-n,1)$
\end{tabular}
\end{center}
\end{corollary}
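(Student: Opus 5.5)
The plan is to apply Lemma~\ref{lem:type} to the explicit formulas. Each of $k$, $h$, $H$ is, up to a $\Lc Z$ term, a combination $a(r)e_\tau\otimes e_\tau+b(r)e_r\otimes e_r+c(r)P_2$, plus possibly a multiple of $g$. By points~1--3 of Lemma~\ref{lem:type}, it is enough to bound the radial coefficients: $|\phi^{(i)}|\le C_ir^{a-i}(1+\ln r)^m$ on $[1,\infty)$. The $\Lc Z$ term is handled the same way. It is built from $\nabla^2$ of the radial function $r^2$ (Lemma~\ref{lem:comm}, point~3), so it reduces to $P_0,P_1,P_2$ with radial coefficients.

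\emph{Bounded range.} Near $r=1$, all coefficients are smooth functions of $r$, since smoothness at the bolt has already been established. On any compact set $\{r\le R_0\}$ the required bounds are therefore trivial, by smoothness of the tensors themselves on $M$. It remains to control the coefficients for $r\ge R_0$.

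\emph{Large $r$.} The coefficients there are built from several kinds of functions:
\begin{itemize}
\item rational functions of $r$, such as $A=\frac{r^2-1}{8(r^p-1)}$, $r^{-p}$, $1/D$ and $\alpha_2$;
\item the function $u'$ and the Killing data $f$, $\kappa$ appearing in $k$;
\item the transcendental integrals $J$ and $M$;
\item for $n=4,5$, the closed forms containing $r^3\ln r/(r-1)$.
\end{itemize}
Each is a convergent Laurent-type expansion in $r^{-1}$, possibly times $\ln r$. Hence it is a classical symbol: if $\phi\sim r^{a}(\ln r)^m$ times an expansion in powers of $r^{-1}$, then $\phi^{(i)}=O(r^{a-i}(1+\ln r)^m)$. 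For $J$ and $M$ I would use $J'=\frac{r(r^2-1)}{r^p-1}$ and $M'=\frac{r^2-1}{r(r^p-1)}$. These are rational symbols of orders $3-p$ and $1-p$. Integrating gives $J=J_\infty+O(r^{4-p})$ when $p>4$, $J\sim\ln r$ when $p=4$, and a polynomial-type growth when $p<4$. The same scheme holds for $M$, with the threshold at $p=2$. Derivatives of $J$ and $M$ are then symbols automatically.

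\emph{Identifying the exponents.} The exponents in the table come from the leading terms, after the cancellations enforced in Sections~\ref{sec:h} and~\ref{sec:H}:
\begin{itemize}
\item For $k$, the $Y$-part is $\sim r^{-p}$, and for $n=4$ the explicit frame components give $(-2,0)$. This uses that $\nabla^2u$ is a radial symbol expression, via $u'$.
\item For $h$, the expansion $h=-2A(P_0-P_1)+O(r^{3-n})$ with $A\sim r^{2-p}/8$ gives $(5-n,0)$. For $n=7$ (so $p=4$) I would recheck the log, which comes from $J$ in the relevant expression, and use the weaker listed type $(5-n,1)$ for $n\ge8$, which is harmless.
\item For $H$, Theorem~\ref{thm:H} already states the growth rates. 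I would verify them by tracking $\mathcal N(r)/D$: the $r^{4-p}$ and $r^2M$ terms, the killed $r^2$ term for $n\ge6$, and the constant removed for $n\ge8$.
\end{itemize}

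\emph{Main obstacle.} The hard step is precisely the $H$ column for $n\ge6$. There, exact cancellation of the $r^2$ and constant terms relies on the asymptotics $M_\infty$ and $J_\infty$. I would expand $M=M_\infty+\sum c_jr^{-jp+\dots}$ and $J=J_\infty+\dots$ as symbol series, substitute them into $\alpha$ and $\sigma$, and check the order of the first surviving term. For $n=7$ (so $p=4$), the $J$ term grows like $\ln r$, which explains the logarithm in the table entry.
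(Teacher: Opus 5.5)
Your proposal is correct and is essentially the paper's argument. The frame coefficients are built from rational functions, $r^{-p}$, $\ln r$, $J$ and $M$ (with $J'$, $M'$ rational), so they satisfy symbol estimates, and the exponents are read off from the growth rates in Section~\ref{subsec:source} and Theorems~\ref{thm:h}, \ref{thm:h4}, \ref{thm:H}, \ref{thm:H4}; your split into $\{r\le R_0\}$ and $\{r\ge R_0\}$ is just a more careful version of the same reasoning. One small slip: $h$ contains no $J$ (for $n=7$, $A=\frac1{8(r^2+1)}$, so $h$ is actually of type $(-2,0)$), and the logarithm in the $h$ column is only the harmless weakening the paper allows for simplicity.
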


\begin{proof}
Their frame components are built out of rational functions of $r$, $r^{-p}$, $\ln r$, $J$ and
$M$, all of which satisfy symbol estimates: the rational functions have expansions in powers of
$r^{-1}$ at infinity, and $J'$, $M'$ are rational. The orders follow from
Theorems~\ref{thm:h}, \ref{thm:h4}, \ref{thm:H} and~\ref{thm:H4}; for simplicity, a logarithmic
factor is allowed in the whole column of $h$\nv{, and for $H$ when $n\ge8$ (Theorem~\ref{thm:H}
gives $O(r^{7-n})$ without logarithm)}.
\end{proof}

\begin{lemma}[Membership]\label{lem:member}
If $u$ is smooth of type $(a,m)$, then $u\in \Hs^q_\delta$ for every $q$ and every $\delta>a$.
\end{lemma}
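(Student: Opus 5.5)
We need to bound $\|u\|_{q,\delta}^2=\sum_{j\le q}\int_M|\nabla^ju|^2r^{2(j-\delta)-N}\,d\mu_g$ directly from the pointwise estimates defining type $(a,m)$. By definition, $|\nabla^ju|^2\le C_j^2r^{2(a-j)}(1+\ln r)^{2m}$. Each integrand is therefore bounded by
\[
C_j^2\,r^{2(a-\delta)-N}(1+\ln r)^{2m},
\]
and the powers of $r$ coming from the derivatives cancel exactly. Consequently a single integral controls all the terms $j\le q$ at once.

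Since $u$ is smooth, it lies in $H^q_{\rm loc}$, and the region $\{r\le R_1\}$ is compact: it is a neighbourhood of the bolt, because $r$ is proper on $M$. So we only need finiteness at infinity. We use the form $d\mu_g=r^{n-2}\,d\tau\,dr\,d\mu_{h_Y}$, noting that $n-2=N-1$. Integrating over $\tau$, whose period is finite, and over $Y$, which has finite volume, the integral over $\{r\ge R_1\}$ is bounded by
\[
C\int_{R_1}^\infty r^{2(a-\delta)-1}(1+\ln r)^{2m}\,dr .
\]
This integral converges exactly when $a-\delta<0$: writing $\epsilon=\delta-a>0$, the integrand is $O(r^{-1-\epsilon})$, since $(1+\ln r)^{2m}=O(r^{\epsilon})$. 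This is the hypothesis $\delta>a$.

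There is no real obstacle here. The only points needing care are that the definition of type holds globally on $M$, including at the bolt, where smoothness already gives local integrability, and that the weight $r^{-N}$ exactly compensates the volume growth $r^{N-1}\,dr$. The strict inequality $\delta>a$ is needed, and it is also what absorbs the logarithmic factors.
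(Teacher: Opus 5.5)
Your proof is correct and follows the paper's argument essentially verbatim: the compact region $\{r\le 2\}$ is trivial, and on the end the type estimates combine with $d\mu_g=r^{n-2}\,d\tau\,dr\,d\mu_{h_Y}$ and $N=n-1$ to reduce everything to $\int^\infty r^{2(a-\delta)-1}(\ln r)^{2m}\,dr$, which is finite since $\delta>a$. The only difference is cosmetic: you write $(1+\ln r)^{2m}$ and a general cut radius $R_1$ where the paper uses $(\ln r)^{2m}$ and cuts at $r=2$.
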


\begin{proof}
On the compact set $\{r\le2\}$, there is nothing to prove. On $\{r\ge2\}$,
\[
\begin{aligned}
\int_{\{r\ge2\}}|\nabla^ju|^2r^{2(j-\delta)-N}d\mu
&\le C\int_2^\infty r^{2(a-j)}(\ln r)^{2m}\,r^{2(j-\delta)-(n-1)}\,r^{n-2}\,dr\\
&=C\int_2^\infty r^{2(a-\delta)-1}(\ln r)^{2m}\,dr<\infty .
\end{aligned}
\qedhere
\]
\end{proof}

\subsection{Continuity of the covariant derivative and of differential operators}

\begin{proposition}[Continuity of $\nabla$]\label{prop:nabla}
For all $q\in\mathbb N$ and $\delta\in\mathbb R$, the covariant derivative is continuous
\[
\nabla:\Hs^{q+1}_\delta\longrightarrow \Hs^q_{\delta-1},\qquad
\|\nabla u\|_{q,\delta-1}\le\|u\|_{q+1,\delta}.
\]
\end{proposition}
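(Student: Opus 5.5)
This follows directly from the definition of the norms, by reindexing the sum. Set $v=\nabla u$. For $0\le j\le q$ we have $\nabla^j v=\nabla^{j+1}u$ as tensors. Iterated covariant derivatives are defined without any symmetrization, and the pointwise norm is the full tensor norm induced by $g$. Hence $|\nabla^jv|=|\nabla^{j+1}u|$ pointwise.

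Next, match the weights. The weight attached to $\nabla^jv$ in $\|v\|_{q,\delta-1}$ is
\[
r^{2(j-(\delta-1))-N}=r^{2((j+1)-\delta)-N},
\]
which is exactly the weight attached to $\nabla^{j+1}u$ in $\|u\|_{q+1,\delta}$. Therefore
\[
\|\nabla u\|_{q,\delta-1}^2=\sum_{j=0}^q\int_M|\nabla^{j+1}u|^2r^{2((j+1)-\delta)-N}\,d\mu_g
=\sum_{i=1}^{q+1}\int_M|\nabla^{i}u|^2r^{2(i-\delta)-N}\,d\mu_g .
\]
The right-hand side is $\|u\|_{q+1,\delta}^2$ minus the nonnegative term $i=0$, namely $\int_M|u|^2r^{-2\delta-N}d\mu_g$. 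This gives the inequality with constant $1$. It also shows that $\nabla u$ has finite norm. Since $u\in H^{q+1}_{\rm loc}$, we get $\nabla u\in H^q_{\rm loc}$, so $\nabla u\in\Hs^q_\delta$ with weight $\delta-1$, i.e. $\nabla u\in\Hs^q_{\delta-1}$.

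Finally, $\nabla$ is linear, so this bound is continuity of $\nabla:\Hs^{q+1}_\delta\to\Hs^q_{\delta-1}$. The only point that needs care is that the norm is defined through the same iterated derivatives $\nabla^j$ on every tensor bundle $T$, with no equivalent-norm adjustments, so no constant appears; there is no real obstacle.
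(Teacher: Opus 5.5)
Your proof is correct and is exactly the paper's argument: reindex $j\mapsto i=j+1$, observe that the weight $r^{2(j-(\delta-1))-N}$ coincides with $r^{2(i-\delta)-N}$, and drop the nonnegative $i=0$ term to get the bound with constant $1$. Your remark that $u\in H^{q+1}_{\rm loc}$ gives $\nabla u\in H^q_{\rm loc}$ is a small detail the paper leaves implicit.
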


\begin{proof}
By definition,
\[
\|\nabla u\|_{q,\delta-1}^2=\sum_{j=0}^q\int|\nabla^{j+1}u|^2r^{2(j-\delta+1)-N}
=\sum_{i=1}^{q+1}\int|\nabla^iu|^2r^{2(i-\delta)-N}\le\|u\|_{q+1,\delta}^2 .\qedhere
\]
\end{proof}

\begin{proposition}[Multiplication]\label{prop:mult}
Let $A$ be a smooth tensor of type $(b,0)$ and let $\ast$ denote any contraction. Then
$u\mapsto A\ast u$ is continuous $\Hs^q_\delta\to \Hs^q_{\delta+b}$.
\end{proposition}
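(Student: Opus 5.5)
The plan is to expand $\nabla^j(A\ast u)$ with the Leibniz rule and then estimate each term directly with the symbol bounds on $A$. The contraction $\ast$ commutes with $\nabla$, so
\[
\nabla^j(A\ast u)=\sum_{i=0}^{j}\binom ji\,\nabla^{j-i}A\ast\nabla^iu ,
\]
and pointwise
\[
|\nabla^j(A\ast u)|\le C\sum_{i=0}^j|\nabla^{j-i}A|\,|\nabla^iu| .
\]
Here $C$ depends only on the contraction pattern and $j$. Since $A$ is of type $(b,0)$, we have $|\nabla^{j-i}A|\le C_{j-i}r^{b-(j-i)}$. This gives
\[
|\nabla^j(A\ast u)|\le C\sum_{i\le j} r^{b-j+i}|\nabla^iu| .
\]

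Next we square this bound, using $(\sum_{i\le j} x_i)^2\le (j+1)\sum x_i^2$, and multiply by the weight for $\Hs^q_{\delta+b}$, which is $r^{2(j-\delta-b)-N}$. The term with index $i$ then carries the factor
\[
r^{2(b-j+i)}\,r^{2(j-\delta-b)-N}=r^{2(i-\delta)-N},
\]
which is exactly the weight of $\nabla^iu$ in $\|u\|_{i,\delta}$. Integrating over $M$ and summing over $j\le q$, we obtain
\[
\|A\ast u\|_{q,\delta+b}^2\le C\sum_{j\le q}\sum_{i\le j}\int|\nabla^iu|^2r^{2(i-\delta)-N}\,d\mu_g\le C'\|u\|_{q,\delta}^2 .
\]
This is the continuity claimed in the proposition.

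One point needs care. The computation uses the Leibniz rule for $u\in H^q_{\rm loc}$ only weakly, and that is fine because $A$ is smooth. The bounds hold on all of $M$, since $r\ge1$ and the type estimates are global, so the bolt needs no special treatment. I do not expect a genuine obstacle here: the heart of the argument is the weight bookkeeping in the middle step. That bookkeeping works because $r^{b-(j-i)}$ supplies exactly the power shift between the $\delta$ and $\delta+b$ weights at each derivative order.
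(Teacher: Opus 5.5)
Your proposal is correct and follows essentially the same route as the paper: you expand $\nabla^j(A\ast u)$ with the Leibniz rule and bound it pointwise by $C\sum_{i\le j}r^{b-(j-i)}|\nabla^iu|$. The weight shift $r^{2(b-j+i)}r^{2(j-\delta-b)-N}=r^{2(i-\delta)-N}$ then gives $\|A\ast u\|_{q,\delta+b}\le C\|u\|_{q,\delta}$, exactly as in the paper's proof.
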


\begin{proof}
By the Leibniz rule, $|\nabla^j(A\ast u)|\le C\sum_{i\le j}r^{b-(j-i)}|\nabla^iu|$, hence
\[
\int|\nabla^j(A\ast u)|^2r^{2(j-\delta-b)-N}\le C\sum_{i\le j}\int|\nabla^iu|^2r^{2(i-\delta)-N}.
\qedhere
\]
\end{proof}

\begin{corollary}[Differential operators]\label{cor:diffop}
Let $L=\sum_{j=0}^mA_j\ast\nabla^j$ with $A_j$ smooth of type $(j-m,0)$. Then
$L:\Hs^{q+m}_\delta\to \Hs^q_{\delta-m}$ is continuous. In particular
\[
P:\Hs^{q+4}_\delta\longrightarrow \Hs^q_{\delta-4}
\]
is continuous for all $q,\delta$.
\end{corollary}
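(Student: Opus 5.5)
The statement to prove is Corollary~\ref{cor:diffop}. The plan is to write $L u=\sum_j A_j\ast\nabla^ju$ and estimate each term separately, combining Proposition~\ref{prop:nabla} with Proposition~\ref{prop:mult}.

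For a fixed $j\le m$, iterate Proposition~\ref{prop:nabla} $j$ times to get
$\nabla^j:\Hs^{q+m}_\delta\to\Hs^{q+m-j}_{\delta-j}$ continuously, and then use the trivial inclusion $\Hs^{q+m-j}_{\delta-j}\subset\Hs^q_{\delta-j}$. Since $A_j$ has type $(j-m,0)$, Proposition~\ref{prop:mult} gives that $v\mapsto A_j\ast v$ is continuous $\Hs^q_{\delta-j}\to\Hs^q_{\delta-j+(j-m)}=\Hs^q_{\delta-m}$. Summing the finitely many terms proves continuity of $L$ into $\Hs^q_{\delta-m}$.

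For $P$ the only real work is to check that $P=\Delta_L^2-\Lc\,Q\,\dv$ has this form with $m=4$.

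\emph{First part.} By Lemma~\ref{lem:type}, $g$ and $g^{-1}$ are of type $(0,0)$ and $\Rm$ is of type $(-p-2,0)$, which is a fortiori type $(-2,0)$ since $r\ge1$. The operator $\Delta_L=-\tr\nabla^2-2\Rm$ is therefore of the form $A_2\ast\nabla^2+A_0$ with $A_2$ of type $(0,0)$ and $A_0$ of type $(-2,0)$. In other words, $\Delta_L$ satisfies the hypothesis with $m=2$.

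\emph{Composition.} If $L_1$ (order $m_1$) and $L_2$ (order $m_2$) satisfy the hypothesis, then so does $L_1L_2$, with order $m_1+m_2$. To see this, expand $\nabla^i(A\ast\nabla^ju)$ by the Leibniz rule. A term $\nabla^{i-l}A\ast\nabla^{j+l}u$ has coefficient of type $(j-m_2-(i-l),0)$. Multiplying by the outer coefficient of type $(i-m_1,0)$ gives type $\big((j+l)-(m_1+m_2),0\big)$, which is exactly the required type for the order $j+l$. The Leibniz step uses point~3 of Lemma~\ref{lem:type} and the fact that the $\nabla^k$ of a type-$(b,0)$ tensor is of type $(b-k,0)$, which holds directly from the definition. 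Alternatively, one can avoid the composition lemma and simply compose continuous maps: $\Delta_L:\Hs^{q+4}_\delta\to\Hs^{q+2}_{\delta-2}\to\Hs^q_{\delta-4}$.

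\emph{Second part.} In the same way, $\dv$, $d$, $d^*$, $\Lc$ and $\tr$ are first-order operators whose coefficients are built from $g$, so they are of type $(0,0)$ at order $1$. Hence $\Lc\,Q\,\dv$ maps $\Hs^{q+4}_\delta\to\Hs^{q+3}_{\delta-1}\to\Hs^{q+1}_{\delta-3}\to\Hs^q_{\delta-4}$ continuously.

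The composition-of-maps route makes the argument essentially bookkeeping. The main point to watch is that the curvature term is bounded with the correct weight, which holds because its decay $r^{-p-2}$ is at least as good as $r^{-2}$. A minor subtlety is the bolt: the Sobolev norms there are only local $H^q$ norms on a compact set where $r$ is bounded, so no issue arises.
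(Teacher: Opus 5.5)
Your proof is correct. The first assertion is handled as in the paper: iterate Proposition~\ref{prop:nabla}, drop the higher derivatives, then apply Proposition~\ref{prop:mult} with $b=j-m$. Your observation that type $(-p-2,0)$ is a fortiori type $(-2,0)$ since $r\ge1$ plays the role of the weight inclusion $\Hs^q_\delta\subset\Hs^q_{\delta_1}$ that the paper invokes.

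For $P$ the two routes differ. The paper expands $P=\Delta_L^2-\Lc Q\dv$ into a single normal form $\sum_{j\le4}A_j\ast\nabla^j$. It identifies the coefficients ($A_4$ built from $g$, $A_3=0$, and $A_j$ for $j\le2$ involving $\nabla^{2-j}\Rm$ or $\Rm\ast\Rm$) and reads off their types from Lemma~\ref{lem:type}.

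You instead factor $P$ into operators of order $1$ or $2$ and compose the resulting continuous maps. Each factor is trivially in the class: its coefficients are built from $g$, plus, for $\Delta_L$, the zeroth-order term $\Rm$. Alternatively you use your composition lemma, whose Leibniz bookkeeping is correct.

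Your route avoids verifying $A_3=0$ and tracking the types of $\nabla\Rm$, $\nabla^2\Rm$ and $\Rm\ast\Rm$ by hand. Those derivative bounds are still used, but only implicitly, through the definition of type inside Proposition~\ref{prop:mult}. Your argument also applies unchanged to any operator written as a composition of such factors. The paper's expansion instead gives an explicit description of $P$ as one fourth-order operator.

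A minor slip: $\tr$ is of order zero, not one, but it plays no role in the argument.
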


\begin{proof}
The first assertion follows from Propositions~\ref{prop:nabla} and~\ref{prop:mult}, together
with the continuous inclusion $\Hs^q_{\delta}\subset \Hs^q_{\delta_1}$ for $\delta\le\delta_1$
(since $r\ge1$). For $P=\Delta_L^2-\Lc Q\dv$, expanding gives $P=\sum_{j\le4}A_j\ast\nabla^j$
where $A_4$ is built from $g$ (type $(0,0)$), \nv{$A_3=0$ (the operators $\nabla^*\nabla$,
$\Lc$, $Q$ and $\dv$ are contractions of iterated covariant derivatives, and curvature only appears
through the terms $\Rm$ of $\Delta_L$), and $A_j$, $j\le2$, involves $\nabla^{2-j}\Rm$, or
$\Rm\ast\Rm$ for $j=0$}, which by Lemma~\ref{lem:type} are of type $(-p-4+j,0)$ or better, hence
of type $(j-4,0)$ since $p\ge1$.
\end{proof}

\subsection{Cut-off functions}
Fix $\eta\in C^\infty(\mathbb R)$ with $\eta=1$ on $(-\infty,1]$ and $\eta=0$ on $[2,\infty)$, and
set $\eta_R=\eta(r/R)$ for $R\ge1$.

\begin{lemma}\label{lem:cutoff}
For every $i\ge1$ there is $C_i$, independent of $R$, such that
$|\nabla^i\eta_R|\le C_i\,r^{-i}$, and $\supp\nabla\eta_R\subset A_R:=\{R\le r\le2R\}$.
\end{lemma}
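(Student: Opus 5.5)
The support claim is immediate. $\eta_R=\eta(r/R)$ is constant wherever $r/R\le1$ or $r/R\ge2$, so $d\eta_R$, and with it every $\nabla^i\eta_R$ with $i\ge1$, vanishes outside $A_R$.

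For the derivative bounds, write $\eta_R=\phi_R(r)$ with $\phi_R(s)=\eta(s/R)$. Then
\[
\phi_R^{(i)}(s)=R^{-i}\eta^{(i)}(s/R).
\]
For $i\ge1$ this is nonzero only when $R\le s\le 2R$. On that range $R^{-i}\le 2^i s^{-i}$, so
\[
|\phi_R^{(i)}(s)|\le 2^i\|\eta^{(i)}\|_\infty\, s^{-i}\qquad(i\ge1).
\]
We also have $|\phi_R|\le\|\eta\|_\infty$. These constants do not depend on $R$.

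Lemma~\ref{lem:type}(2) would give the result at once if its constants $C_j$ depended only on the constants in the hypothesis on $\phi$. To be safe, I will redo that step with explicit uniformity.

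By the chain rule (Faà di Bruno), $\nabla^i(\phi_R(r))$ is a finite sum, with universal combinatorial coefficients, of terms
\[
\phi_R^{(k)}(r)\,\nabla^{j_1}r\otimes\cdots\otimes\nabla^{j_k}r,\qquad 1\le k\le i,\quad j_1+\dots+j_k=i,\quad j_l\ge1.
\]
By Lemma~\ref{lem:type}(1), $r$ is of type $(1,0)$, so $|\nabla^{j}r|\le C'_j r^{1-j}$, with constants fixed by the geometry and independent of $R$. Each term is therefore bounded by
\[
C\,r^{-k}\prod_{l} r^{1-j_l}=C\,r^{-k}\,r^{k-i}=C\,r^{-i},
\]
where $C$ depends only on $i$, $\eta$ and the $C'_j$. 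Summing the finitely many terms gives $|\nabla^i\eta_R|\le C_i r^{-i}$ with $C_i$ independent of $R$.

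The bolt needs no separate treatment. For $R\ge1$, every derivative of $\eta_R$ vanishes near $r=1$, because there $r<R$ except in the borderline case $R=1$. Even in that case $\eta_R$ is the composition of the smooth function $r$ with a smooth function, and the estimate above is pointwise, using only the bounds on $\nabla^jr$, which hold on all of $M$.

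The only point requiring care is this uniformity in $R$. It follows because every $R$-dependence enters through the single-variable factors $\phi_R^{(k)}$, whose bounds $s^{-k}$ are uniform.
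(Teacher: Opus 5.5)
Your proof is correct and follows essentially the same route as the paper's: a Faà di Bruno expansion of $\nabla^i\eta_R$ into terms $R^{-k}\eta^{(k)}(r/R)\,\nabla^{j_1}r\otimes\cdots\otimes\nabla^{j_k}r$, the bound $R^{-k}\le 2^k r^{-k}$ on $A_R$, and $|\nabla^{j}r|\le C r^{1-j}$ from Lemma~\ref{lem:type}. Your explicit checks of uniformity in $R$ and of the bolt only spell out what the paper leaves implicit.
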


\begin{proof}
By the Faà di Bruno formula, $\nabla^i\eta_R$ is a sum of terms
$R^{-l}\eta^{(l)}(r/R)\,\nabla^{i_1}r\otimes\dots\otimes\nabla^{i_l}r$ with
$i_1+\dots+i_l=i$. On $A_R$, $R^{-l}\le2^lr^{-l}$, and $|\nabla^{i_s}r|\le Cr^{1-i_s}$
by Lemma~\ref{lem:type}.
\end{proof}

\begin{proposition}[Convergence of cut-offs]\label{prop:cutoff}
For every $u\in \Hs^q_\delta$, $\eta_Ru\to u$ in $\Hs^q_\delta$ as $R\to\infty$. In particular
compactly supported smooth tensors are dense in $\Hs^q_\delta$.
\end{proposition}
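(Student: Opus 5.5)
We prove that $\|(1-\eta_R)u\|_{q,\delta}\to0$. Write $u-\eta_Ru=(1-\eta_R)u$. By the Leibniz rule,
\[
\nabla^j\big((1-\eta_R)u\big)=(1-\eta_R)\nabla^ju-\sum_{i=1}^{j}\binom ji\,\nabla^i\eta_R\otimes\nabla^{j-i}u .
\]
We estimate the two kinds of terms separately in the weighted norm.

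For the first term, $|1-\eta_R|\le C$ and $1-\eta_R$ vanishes on $\{r\le R\}$. Hence
\[
\int|(1-\eta_R)\nabla^ju|^2r^{2(j-\delta)-N}\le C\int_{\{r\ge R\}}|\nabla^ju|^2r^{2(j-\delta)-N}.
\]
This is the tail of a convergent integral, since $u\in\Hs^q_\delta$, so it tends to $0$ by dominated convergence.

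For the terms with $i\ge1$, Lemma~\ref{lem:cutoff} gives $|\nabla^i\eta_R|\le C_ir^{-i}$, with support in $A_R\subset\{r\ge R\}$. Therefore
\[
\int|\nabla^i\eta_R|^2|\nabla^{j-i}u|^2r^{2(j-\delta)-N}\le C\int_{\{r\ge R\}}|\nabla^{j-i}u|^2r^{2(j-i-\delta)-N}.
\]
This is again a tail of one of the integrals defining $\|u\|_{q,\delta}$, so it tends to $0$. Summing over $j\le q$ and over $i$ gives $\|u-\eta_Ru\|_{q,\delta}\to0$. The only point to check is that $C_i$ is uniform in $R$, which Lemma~\ref{lem:cutoff} provides.

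For density, it remains to approximate the compactly supported element $\eta_Ru$ by smooth compactly supported sections. The function $\eta_R$ is smooth and constant near the bolt, so $\eta_Ru\in H^q_{\rm loc}$ has support in the compact set $\{r\le2R\}$. On this compact set the weights $r^{2(j-\delta)-N}$ are bounded above and below, so the $\Hs^q_\delta$ norm is equivalent to the unweighted $H^q$ norm there. Standard mollification in charts, combined with a partition of unity, then gives smooth sections supported in a fixed compact set that converge to $\eta_Ru$ in $H^q$, hence in $\Hs^q_\delta$. A diagonal argument in $R$ concludes.

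I expect no serious obstacle. The mild subtlety is only the uniformity of the constants in $R$ and the fact that the weight shifts exactly compensate the factors $r^{-i}$ from the derivatives of $\eta_R$, which is the reason for choosing the weights $r^{2(j-\delta)-N}$.
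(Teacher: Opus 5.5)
Your proof is correct and follows essentially the same route as the paper. Both use the Leibniz rule on $(1-\eta_R)u$, bound the zeroth-order term by the tail $\int_{\{r\ge R\}}|\nabla^ju|^2r^{2(j-\delta)-N}$, absorb the factors $r^{-i}$ from Lemma~\ref{lem:cutoff} into the weight shift to get tails of lower-order integrals, and obtain density by mollifying on the compact set $\{r\le 2R\}$.
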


\begin{proof}
Write $v_R=(1-\eta_R)u$. By the Leibniz rule and Lemma~\ref{lem:cutoff},
\[
|\nabla^jv_R|\le(1-\eta_R)|\nabla^ju|+C\sum_{i=1}^jr^{-i}|\nabla^{j-i}u|\,\mathbf 1_{A_R}.
\]
The first term tends to $0$ in $L^2(r^{2(j-\delta)-N}d\mu)$ by dominated convergence. For the
second,
\[
\begin{aligned}
\int_{A_R}r^{-2i}|\nabla^{j-i}u|^2r^{2(j-\delta)-N}
&=\int_{A_R}|\nabla^{j-i}u|^2r^{2(j-i-\delta)-N}\\
&\le\int_{\{r\ge R\}}|\nabla^{j-i}u|^2r^{2(j-i-\delta)-N}\longrightarrow0,
\end{aligned}
\]
since it is the tail of a convergent integral. Summing over $j\le q$ gives $\|v_R\|_{q,\delta}\to0$.
The density statement follows by combining this with standard mollification on the compact set
$\{r\le2R\}$.
\end{proof}

\subsection{Continuity of the quadratic form}

\begin{proposition}\label{prop:Qcont}
$\mathcal Q$ extends to a continuous quadratic form on $\mathcal W=\Hs^1_{\delta_0}$:
\[
|\mathcal Q(u)-\mathcal Q(v)|\le C\big(\|u\|_{\mathcal W}+\|v\|_{\mathcal W}\big)\|u-v\|_{\mathcal W}.
\]
\end{proposition}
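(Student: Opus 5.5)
The plan is to write $\mathcal Q$ as a bounded symmetric bilinear form on $\mathcal W$ and deduce the stated estimate from the polarization identity. For $u,v\in\mathcal W$ set
\[
\mathcal B(u,v)=\int_M\big(\langle\nabla u,\nabla v\rangle-2\langle\Rm u,v\rangle\big)\,d\mu_g ,
\]
so that $\mathcal Q(u)=\mathcal B(u,u)$. This agrees with the formula of Section~\ref{sec:setting} on smooth compactly supported $u$. The main step is the bound $|\mathcal B(u,v)|\le C\|u\|_{\mathcal W}\|v\|_{\mathcal W}$. Once it holds, $\mathcal B$ is well defined on all of $\mathcal W$, and since
\[
\mathcal Q(u)-\mathcal Q(v)=\mathcal B(u-v,u)+\mathcal B(v,u-v)
\]
by symmetry, we get the inequality in Proposition~\ref{prop:Qcont}.

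The gradient term is controlled directly by Cauchy--Schwarz:
\[
\Big|\int\langle\nabla u,\nabla v\rangle\Big|\le\|\nabla u\|_{L^2}\|\nabla v\|_{L^2}\le\|u\|_{\mathcal W}\|v\|_{\mathcal W}.
\]
For the curvature term we use Lemma~\ref{lem:type}(4): $\Rm$ is of type $(-p-2,0)$, so $|\Rm|\le Cr^{-p-2}\le Cr^{-2}$ because $p\ge1$ and $r\ge1$. Then
\[
\Big|\int\langle\Rm u,v\rangle\Big|\le C\int r^{-2}|u||v|\le C\|r^{-1}u\|_{L^2}\|r^{-1}v\|_{L^2}\le C\|u\|_{\mathcal W}\|v\|_{\mathcal W}.
\]
We should check that the weight in $\|\cdot\|_{\mathcal W}$ really is $r^{-2}|u|^2\,d\mu_g$. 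With $\delta_0=\frac{3-n}2$ and $N=n-1$, the exponent for $j=0$ is $-2\delta_0-N=(n-3)-(n-1)=-2$. For $j=1$ it is $2(1-\delta_0)-N=0$. This matches the displayed formula for the norm.

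There is no real obstacle here. The only point needing care is the meaning of ``extends'': on smooth compactly supported tensors, integration by parts gives $\mathcal Q(u)=\langle\Delta_Lu,u\rangle_{L^2}$. These tensors are dense in $\mathcal W$ by Proposition~\ref{prop:cutoff}. The energy expression is continuous on $\mathcal W$, so it is the unique continuous extension. It also coincides with the energy expression used in Theorem~\ref{thm:Ozuch}. Finally, for $k$ of type $(a_k,0)$ we have $k\in\mathcal W$ by Lemma~\ref{lem:member}, since $a_k<\delta_0$ for $n\ge4$. Indeed $a_k=-2<-\frac12$ when $n=4$, and $3-n<\frac{3-n}2$ when $n\ge5$. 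So the energy expression for $k$ is exactly the extended $\mathcal Q(k)$.
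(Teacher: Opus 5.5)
Your proof is correct and follows the same route as the paper: you bound the bilinear form $(u,v)\mapsto\int\langle\nabla u,\nabla v\rangle-2\langle\Rm u,v\rangle$ on $\mathcal W$ using $|\Rm|\le Cr^{-p-2}\le Cr^{-2}$ and Cauchy--Schwarz, then get the difference estimate by bilinearity. Your explicit check of the weights in $\|\cdot\|_{\mathcal W}$ and your remarks on density and on $k\in\mathcal W$ are accurate additions that the paper leaves implicit; it obtains $k\in\mathcal W$ separately, in the proof of Theorem~\ref{thm:main}.
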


\begin{proof}
By Lemma~\ref{lem:type}, $|\Rm|\le Cr^{-p-2}\le Cr^{-2}$, hence
$|\langle\Rm u,v\rangle|\le Cr^{-2}|u||v|$, and the bilinear form
$(u,v)\mapsto\int\langle\nabla u,\nabla v\rangle-2\langle\Rm u,v\rangle$ is bounded on
$\mathcal W$ by Cauchy--Schwarz.
\end{proof}

%====================================================================
\section{The unstable direction}\label{sec:main}
%====================================================================
We are now in position to construct our compactly supported destabilizing direction.
\begin{theorem}\label{thm:main}
Let $n\ge4$, let $g$ be the Riemannian Schwarzschild--Tangherlini metric and let $H$ be the tensor
of Theorem~\ref{thm:H} for $n\ge5$, and of Theorem~\ref{thm:H4} for $n=4$. The tensor
\[
k_R:=P(\eta_RH)
\]
is smooth, compactly supported in $\{r\le2R\}$, transverse-traceless,  and converge to  $k$, so for $R$ large enough, it satisfies
$$\mathcal Q(k_R)<0.$$
\nv{More precisely, with $\delta=\frac{11-n}2$ and $(a_H,m)$ the type of $H$ given by
Corollary~\ref{cor:types}, there is $C>0$ such that, for $R\ge2$,
\[
\|k_R-k\|_{\mathcal W}\le C\,R^{a_H-\delta}(\ln R)^m,\qquad
|\mathcal Q(k_R)-\mathcal Q(k)|\le C\,R^{a_H-\delta}(\ln R)^m ,
\]
where $m=1$ and
\[
a_H-\delta=-\tfrac32\ (n=4),\quad -1\ (n=5),\quad -\tfrac32\ (n=6),\quad -2\ (n=7),\quad
\tfrac{3-n}2\ (n\ge8).
\]}
\end{theorem}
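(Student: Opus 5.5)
The plan is to reduce everything to the continuity results of Section~\ref{sec:sobolev}, applied to $H-\eta_RH=(1-\eta_R)H$.

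\emph{Qualitative properties.} Since $\eta_RH$ is smooth, trace-free and supported in $\{r\le2R\}$, and $P$ is a differential operator, $k_R=P(\eta_RH)$ is smooth and supported in $\{r\le 2R\}$. It is TT because $P$ maps trace-free symmetric tensors to TT tensors (Section~\ref{sec:PH}). By Theorem~\ref{thm:PH}, $P(H)=k$, so
\[
k-k_R=P\big((1-\eta_R)H\big).
\]

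\emph{Convergence estimate.} By Corollary~\ref{cor:diffop}, with $q=1$ and weight $\delta=\delta_0+4=\frac{11-n}2$, we have
\[
\|k-k_R\|_{\mathcal W}=\|P((1-\eta_R)H)\|_{1,\delta_0}\le C\|(1-\eta_R)H\|_{5,\delta}.
\]
Proposition~\ref{prop:cutoff} alone gives only convergence, not the rate. To get the rate, I will estimate the norm directly:
\begin{itemize}
\item By the Leibniz rule, Lemma~\ref{lem:cutoff} and the type $(a_H,m)$ of $H$ (Corollary~\ref{cor:types}), $|\nabla^j((1-\eta_R)H)|\le C r^{a_H-j}(1+\ln r)^m$ on $\{r\ge R\}$, and this quantity vanishes on $\{r<R\}$.
\item The computation of Lemma~\ref{lem:member} then gives
\[
\|(1-\eta_R)H\|_{5,\delta}^2\le C\int_R^\infty r^{2(a_H-\delta)-1}(\ln r)^{2m}\,dr\le C R^{2(a_H-\delta)}(\ln R)^{2m}.
\]
\end{itemize}
The last inequality needs $a_H<\delta$ and $R\ge2$; it follows by integrating by parts, or by bounding $(\ln r)^{2m}\le(\ln R)^{2m}(r/R)^{\epsilon}$ and tracking the constant. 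Taking square roots gives the first bound.

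\emph{Checking the exponents.} I check $a_H-\delta$ against the table of Corollary~\ref{cor:types}:
\begin{itemize}
\item $n=4$: $2-\frac72=-\frac32$;
\item $n=5$: $2-3=-1$;
\item $n=6$: $1-\frac52=-\frac32$;
\item $n=7$: $0-2=-2$;
\item $n\ge8$: $(7-n)-\frac{11-n}2=\frac{3-n}2$.
\end{itemize}
All of these are negative. For $n\ge8$, Corollary~\ref{cor:types} permits $m=1$ (the table lists type $(7-n,1)$), consistent with the statement's $m=1$.

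\emph{Quadratic form and conclusion.} Proposition~\ref{prop:Qcont} gives
\[
|\mathcal Q(k_R)-\mathcal Q(k)|\le C(\|k_R\|_{\mathcal W}+\|k\|_{\mathcal W})\|k_R-k\|_{\mathcal W}.
\]
We have $\|k\|_{\mathcal W}<\infty$ by Lemma~\ref{lem:member}. For $n\ge5$, $k$ has type $(3-n,0)$ or $(-2,0)$ and $\delta_0=\frac{3-n}2$, so we need $3-n<\frac{3-n}2$, i.e.\ $n>3$. For $n=4$, $-2<-\frac12$. The norm $\|k_R\|_{\mathcal W}$ is then bounded uniformly in $R$, which yields the second estimate.

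One point needs care: $\mathcal Q(k)$ in Theorem~\ref{thm:Ozuch} is the energy expression, which coincides with the continuous extension on $\mathcal W$. For the compactly supported $k_R$, the energy expression equals $\langle\Delta_Lk_R,k_R\rangle$. Hence $\mathcal Q(k_R)\to\mathcal Q(k)<0$, and $\mathcal Q(k_R)<0$ for $R$ large.

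The main obstacle is bookkeeping rather than an idea: we need to verify that the types of $H$ in Corollary~\ref{cor:types} are correct, including smoothness at the bolt. This is already granted by the earlier results, so I expect it to cause no real difficulty.
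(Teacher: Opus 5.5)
Your proposal is correct and follows essentially the same route as the paper: you write $k-k_R=P((1-\eta_R)H)$, apply Corollary~\ref{cor:diffop} with $\delta=\frac{11-n}2$, bound $\|(1-\eta_R)H\|_{5,\delta}$ by the tail integral of Lemma~\ref{lem:member} to get the rate, and conclude with Proposition~\ref{prop:Qcont}. The only differences are cosmetic: you obtain $k\in\mathcal W$ from the type of $k$ rather than from $k=P(H)$, and you deduce convergence directly from the rate instead of first invoking Proposition~\ref{prop:cutoff}.
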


\begin{proof}
 The tensor $\eta_RH$ is smooth, trace-free and supported in
$\{r\le2R\}$. Since $P$ is a differential operator that maps trace-free tensors to TT
tensors \cite{DelTT}, $k_R$ is smooth, TT and supported in $\{r\le2R\}$. It is explicit since
$H$ is.
 By Theorem~\ref{thm:PH} and linearity,
\[
k_R-k=P(\eta_RH)-P(H)=-P\big((1-\eta_R)H\big).
\]
Let $\delta=\delta_0+4=\frac{11-n}2$. By Corollary~\ref{cor:types}, $H$ is of type $(a_H,m)$ with
\[
a_H=2\ (n=4,5),\qquad a_H=1\ (n=6),\qquad a_H=0\ (n=7),\qquad a_H=7-n\ (n\ge8),
\]
and in every case $a_H<\frac{11-n}2$ (for $n\ge8$, $7-n<\frac{11-n}2$ is equivalent to $n>3$).
Lemma~\ref{lem:member} therefore gives $H\in \Hs^5_\delta$. By Proposition~\ref{prop:cutoff},
$(1-\eta_R)H\to0$ in $\Hs^5_\delta$, and by Corollary~\ref{cor:diffop} (with one derivative in the
target and $m=4$),
\[
\|k_R-k\|_{\mathcal W}=\big\|P\big((1-\eta_R)H\big)\big\|_{1,\delta_0}\le C\,\|(1-\eta_R)H\|_{5,\delta}
\longrightarrow0 .
\]
In particular $k=P(H)\in\mathcal W$. By Proposition~\ref{prop:Qcont},
$\mathcal Q(k_R)\to\mathcal Q(k)$, and $\mathcal Q(k)<0$ by Theorem~\ref{thm:Ozuch}. Hence
$\mathcal Q(k_R)<0$ for $R$ large.

\nv{For the rate, note that by the Leibniz rule and Lemma~\ref{lem:cutoff},
$|\nabla^j((1-\eta_R)H)|\le C\,r^{a_H-j}(1+\ln r)^m\,\mathbf 1_{\{r\ge R\}}$ for $j\le5$, with $C$
independent of $R$. As in the proof of Lemma~\ref{lem:member}, since $a_H<\delta$,
\[
\|(1-\eta_R)H\|_{5,\delta}^2\le C\int_R^\infty r^{2(a_H-\delta)-1}(1+\ln r)^{2m}\,dr
\le C\,R^{2(a_H-\delta)}(\ln R)^{2m}\qquad(R\ge2).
\]
The estimate of $\|k_R-k\|_{\mathcal W}$ follows from Corollary~\ref{cor:diffop}, and that of
$|\mathcal Q(k_R)-\mathcal Q(k)|$ from Proposition~\ref{prop:Qcont}, since $\|k_R\|_{\mathcal W}$
is bounded. The values of $a_H-\delta$ follow from Corollary~\ref{cor:types}.}
\end{proof}

\begin{remark}
\begin{enumerate}
\item Cutting off $k$ directly would not work, since $\eta_Rk$ is not divergence-free. The point
of the construction is that the cut-off is applied to the potential $H$, and that $P$ restores the
TT condition exactly.
\item For $n=5$ one has $k=O(r^{-2})$ and $k\notin L^2$, but $k\in\mathcal W$; this is why the
energy space $\mathcal W$, rather than $L^2$, is the natural framework.
\item Since $k_R$ is smooth, compactly supported and TT with $\mathcal Q(k_R)<0$, the min-max
principle shows that the bottom of the spectrum of (the Friedrichs extension of) $\Delta_L$ on
$L^2$ TT tensors is negative. This is stronger than the instability in the sense of
\cite{BiqOz26,Del26,Ozu26}, where the destabilizing tensor is not compactly supported \nv{(and
not even in $L^2$ when $n=5$, see point~2). Note that deducing the negativity of the bottom of
the spectrum of the Friedrichs extension from a TT tensor which is not compactly supported, even
if it lies in $L^2$, requires precisely a density statement of the type established here.} For
$n=4$, it gives an explicit, smooth, compactly supported alternative to the numerical $L^2$
eigentensor of \cite{GPY82}.
\end{enumerate}
\end{remark}

%====================================================================
\section{Perspectives}\label{sec:persp}
%====================================================================

\subsection{Existence of the preimages by the Fredholm alternative}\label{subsec:fredholm}

The proof of Theorem~\ref{thm:main} only relies on the existence of a smooth trace-free tensor
$H$ with
\[
P(H)=k\qquad\text{and}\qquad H\in \Hs^5_{\delta},\quad \delta=\tfrac{11-n}2 .
\]
The explicit formulas of Sections~\ref{sec:h} and~\ref{sec:H} provide one way of obtaining such
an $H$. Its existence should also follow from the Fredholm alternative in weighted spaces,
without explicit formulas. We outline the expected argument, which has not yet been carried out in
detail for the two tensors $h$ and $H$\nv{; the whole discussion of this section and of the next
one is therefore conditional}.

\paragraph{Reduction to TT tensors.} On a Ricci-flat manifold, $\Delta_L$ commutes with $\dv$ and
with $\Lc$ (Lemma~\ref{lem:comm}). Writing $h=h^{TT}+\Lc Z$ (York decomposition in weighted
spaces), the equation $\Delta_L h\equiv k$ modulo $\operatorname{Im}\Lc$ reduces, since $k$ is TT,
to
\[
\Delta_L\,h^{TT}=k\qquad\text{on TT tensors},
\]
the gauge part $\Lc Z$ being arbitrary. The same holds for the second step with source $h$, after
projection of $h$ onto its TT part.

\paragraph{Fredholm theory.} The metric $g$ is asymptotic to $\mathbb R^{N}\times S^1$, $N=n-1$,
with curvature decaying like $r^{-p-2}$. Since $g$ is Riemannian, $\Delta_L=\nabla^*\nabla-2\Rm$
is asymptotic to the (nonnegative) flat Laplacian of $\mathbb R^N\times S^1$ acting componentwise,
$\Delta_L\simeq-\partial_\tau^2-\sum_{i=1}^N\partial_i^2$. On the Fourier mode
$e^{2i\pi j\tau/\beta}$, $\beta$ the period of $\tau$, it becomes
$\Delta_{\mathbb R^N}+(2\pi j/\beta)^2$ with $\Delta_{\mathbb R^N}=-\sum\partial_i^2\ge0$: for
$j\ne0$, this is a positive mass term, so these modes are invertible, with exponential decay. On
the $\tau$-independent mode ($j=0$), $\Delta_L$ is asymptotic to the flat Laplacian
$\Delta_{\mathbb R^N}$ acting componentwise. \nv{The end is of ALF type (an $S^1$-bundle over
$\mathbb R^N$), which is neither exactly asymptotically conical in the sense of Lockhart--McOwen
\cite{LM85} nor asymptotically flat in the sense of Bartnik \cite{Bar86} in dimension $n$. An
adapted framework is the fibred boundary calculus of Hausel--Hunsicker--Mazzeo \cite{HHM04}, or
the weighted (Hardy type) inequalities of Minerbe on ALF manifolds \cite{Min09}. In such a
framework, one expects that}
\[
\Delta_L:\Hs^{q+2}_\delta\longrightarrow \Hs^q_{\delta-2}
\]
is Fredholm as soon as $\delta$ avoids the indicial set
$I=\{\ell,\ 2-N-\ell\ :\ \ell\in\mathbb N\}$ of the flat Laplacian (shifted, if necessary, by the
indicial roots coming from the tensorial structure)\nv{, with} range the $L^2$-annihilator of the
kernel of $\Delta_L$ in the dual space $\Hs^0_{2-N-\delta}$.

\paragraph{The two steps.}
\begin{itemize}
\item \emph{First step.} The source $k$ is $O(r^{a_k})$ with $a_k=3-n$ ($a_k=-2$ if $n=4$), so
$k\in \Hs^0_{\delta-2}$ for every $\delta>a_k+2$. We choose $\delta\notin I$ slightly above
$a_k+2$. The obstruction space consists of TT solutions $w$ of $\Delta_Lw=0$ with
$w\in \Hs^0_{2-N-\delta}$, i.e.\ decaying faster than $r^{-N-a_k}$. Such a $w$ would be an
$L^2$ static TT zero mode of $\Delta_L$; its absence is expected (it is a linearized version of
the uniqueness of static black holes), but it has to be checked. If the obstruction space is
trivial, there exists $h\in \Hs^{2}_\delta$ solving the problem. Elliptic regularity gives
smoothness, and averaging over \nv{$U(1)\times O(n-1)$} gives invariance.
\item \emph{Second step.} The same argument with source $h$ and weight $\delta+2$ gives $H$. The
dual weight is now even more negative, so the obstruction space can only be smaller.
\end{itemize}

\paragraph{Logarithms.} When the target weight has to cross an indicial root, the solution
picks up a logarithmic term. This is exactly what happens in the explicit solutions of
Section~\ref{sec:H}: for $n=4$ ($N=3$) and $n=5$ ($N=4$), the growth $r^2$ is an indicial
exponent of the flat Laplacian, and $H$ contains $r^2\ln r$.

What remains to be made explicit is the precise indicial set for $\Delta_L$ acting on symmetric
$2$-tensors on these ends, and the triviality of the obstruction spaces for both steps.

\subsection{Kerr and Myers--Perry metrics}

Ozuch's construction \cite{Ozu26} provides an explicit destabilizing tensor on the Riemannian
Myers--Perry metrics (including Riemannian Kerr in dimension $4$), built from the Killing field
$K=\partial_\tau$. For $n\ge5$ his formula gives, with $V=|K|^2$,
$\mathcal Q(k)=-\frac{3(n+2)}{4(n-1)}\int|dV|^2-\frac{n-5}2\int|F|^2<0$. In dimension $4$, the
instability of Riemannian Kerr follows from his main theorem, since Kerr is not locally
hyperkähler.

These metrics have the same type of end as the Schwarzschild--Tangherlini metrics (asymptotic to
a flat $S^1$-bundle over $\mathbb R^N$, possibly twisted), and their curvature decays at the same
rate. \nv{If the Fredholm argument of Section~\ref{subsec:fredholm} can be completed, it would
apply in the same way, under the same two conditions} (indicial set and triviality of the
obstruction spaces). It \nv{would give} a smooth trace-free $H$ with $P(H)=k$ and the required
growth, and Sections~\ref{sec:sobolev} and~\ref{sec:main}, which only use the decay of the
curvature and the growth of $H$, \nv{would} then yield a smooth, compactly supported TT unstable
direction $P(\eta_RH)$.

An explicit construction is harder, since these metrics have cohomogeneity two.

\subsection{The operator $P$ as a numerical tool for instability}

The main difficulty in testing the stability of an Einstein metric numerically is that the
quadratic form has to be evaluated on TT tensors, and that the condition $\dv u=0$ is a
differential constraint which is hard to impose on a finite-dimensional family of test tensors.
The operator $P$ removes this difficulty: for every trace-free $u$, the tensor $P(u)$ is TT, with
no constraint to solve. On an Einstein manifold with $\Ric=\lambda g$ \cite{DelTT},
\[
P=(\Delta_L-2\lambda)\Big(\Delta_L-\frac n{n-1}\lambda\Big)
-\Lc\Big(d^*d+\frac n{2(n-1)}dd^*-\frac n{n-1}\lambda\Big)\dv .
\]
A numerical stability test can then proceed as follows.
\begin{enumerate}
\item \emph{Choose a family of trace-free test tensors} $u_c$, depending on parameters
$c\in\mathbb R^m$: compactly supported bumps, products of a cut-off with natural tensors
(Hessians of functions, products of Killing forms, curvature tensors), or symmetry-adapted
ansätze reducing the problem to fewer variables. The dependence on $c$ need not be linear:
amplitudes, but also centers, widths or cut-off radii may be taken as parameters.
\item \emph{Compute} $v_c=P(u_c)$, which is TT for every $c$. If $u_c$ is compactly supported, so
is $v_c$, and no boundary terms appear.
\item \emph{Minimize the Rayleigh quotient}
\[
\mathcal R(c)=\frac{\displaystyle\int_M\big(|\nabla v_c|^2-2\langle\Rm v_c,v_c\rangle
-2\lambda|v_c|^2\big)}{\displaystyle\int_M|v_c|^2}
=\frac{\langle(\Delta_L-2\lambda)v_c,v_c\rangle_{L^2}}{\|v_c\|_{L^2}^2}
\]
over $c$, by any optimization method (gradient descent, quasi-Newton, Nelder--Mead, \dots).
There is no eigenvalue problem to solve, and there is no need to reach the minimum: as soon as one
value $\mathcal R(c_*)<0$ is found, the explicit tensor $v_{c_*}=P(u_{c_*})$ is a smooth TT
unstable direction. The minimization is only a way to search for such a $c_*$, and the infimum of
$\mathcal R$ over the family gives an upper bound for the bottom of the spectrum of
$\Delta_L-2\lambda$ on TT tensors.
\end{enumerate}

\begin{remark}\leavevmode
\begin{itemize}
\item On a compact Einstein manifold, the image of $P$ is the
$L^2$-orthogonal, in the TT tensors, of $\ker(\Delta_L-2\lambda)|_{TT}+
\ker(\Delta_L-\frac n{n-1}\lambda)|_{TT}$ \cite{DelTT}. Therefore every TT eigentensor with
eigenvalue $<2\lambda$, other than $\frac n{n-1}\lambda$, lies in the image of $P$, and the
procedure detects every such instability, provided the family $(u_c)$ is rich enough. On
noncompact manifolds, Theorem~\ref{thm:main} shows how compactly supported tensors $u=\eta_RH$ can
be used to approximate a given unstable TT direction which is not compactly supported.
\item $P$ has order $4$, so $\mathcal Q(P(u))$ involves five derivatives of $u$. With exact or
automatic differentiation of an explicit ansatz, this is harmless. With finite differences or
finite elements, one should integrate by parts to lower the order, or use the factorization
$P=4\,D\mathrm{Sch}(g)^*\,D\mathring{\Ric}(g)^*$ \cite{DelTT}.
\item The metric itself may be numerical (for instance an Einstein or Ricci-flat metric computed
by a Ricci-flow or Newton method). Only $g$, its curvature and their derivatives up to order $2$
enter $P$, so the procedure applies directly.
\item Since $P(\Lc Z)=0$, the gauge directions of $u$ are invisible, which reduces the effective
dimension of the test family.
\item A single explicit $u$ with $\mathcal Q(P(u))<0$, evaluated with rigorous quadrature or
interval arithmetic, yields a computer-assisted proof of instability.
\end{itemize}
\end{remark}

%====================================================================

\section*{Acknowledgements}
This article was partially funded by the
French National Research Agency ANR-23-CE40-0010: Einstein constraints:
past, present, and future (EINSTEIN-PPF) and ANR-24-CE40-0702: Orbifold, scalar curvature and
rigidity (OrbiScaR). 

\section*{AI usage statement}
The AI assistant Claude (Anthropic) was used to solve the ordinary differential
equations of Sections~\ref{sec:h} and~\ref{sec:H}, and to help write this article in English.
All results were checked by the author.

\medskip
\noindent {\bf Data Availability}: Data sharing is not applicable to this article as no
data sets were generated or analyzed during the current study.

\medskip
\noindent {\bf Conflict of interest}: The author states that there is no conflict of
interest.


\begin{thebibliography}{99}
\bibitem{And00} M. T. Anderson, \emph{On the structure of solutions to the static vacuum Einstein
equations}, Ann. Henri Poincaré 1 (2000), no. 6, 977--1005.
\bibitem{Bar86} R. Bartnik, \emph{The mass of an asymptotically flat manifold}, Comm. Pure Appl.
Math. 39 (1986), no. 5, 661--693.
\bibitem{Bes87} A. L. Besse, \emph{Einstein Manifolds}, Ergebnisse der Mathematik und ihrer
Grenzgebiete (3) 10, Springer-Verlag, Berlin, 1987.
\bibitem{BiqOz26} O. Biquard and T. Ozuch, \emph{Instability of conformally Kähler, Einstein
metrics}, J. Differential Geom. 133 (2026), no. 3, 335--346. \url{https://doi.org/10.4310/jdg/1779980240}
\bibitem{CD03} \nv{P. T. Chru\'sciel and E. Delay, \emph{On mapping properties of the general
relativistic constraints operator in weighted function spaces, with applications}, M\'em. Soc.
Math. Fr. (N.S.) 94 (2003).}
\bibitem{Cor00} \nv{J. Corvino, \emph{Scalar curvature deformation and a gluing construction for
the Einstein constraint equations}, Comm. Math. Phys. 214 (2000), no. 1, 137--189.}
\bibitem{DK24} M. Dahl and K. Kröncke, \emph{Local and global scalar curvature rigidity of Einstein
manifolds}, Math. Ann. 388 (2024), no. 1, 453--510. \url{https://doi.org/10.1007/s00208-022-02521-6}
\bibitem{Del26} E. Delay, \emph{Instability of Riemannian Schwarzschild--Tangherlini metrics},
Lett. Math. Phys. 116 (2026), Paper No. 102.
\bibitem{Del12} \nv{E. Delay, \emph{Smooth compactly supported solutions of some underdetermined
elliptic PDE, with gluing applications}, Comm. Partial Differential Equations 37 (2012), no. 10,
1689--1716.}
\bibitem{DelTT} E. Delay, \emph{Une machine à tenseurs TT sur les variétés d'Einstein}, C. R. Math.
Acad. Sci. Paris 361 (2023), 495--506. \url{https://doi.org/10.5802/crmath.404}
\bibitem{GL93} \nv{R. Gregory and R. Laflamme, \emph{Black strings and $p$-branes are unstable},
Phys. Rev. Lett. 70 (1993), 2837--2840.}
\bibitem{GPY82} D. J. Gross, M. J. Perry and L. G. Yaffe, \emph{Instability of flat space at
finite temperature}, Phys. Rev. D 25 (1982), no. 2, 330--355.
\bibitem{HHM04} \nv{T. Hausel, E. Hunsicker and R. Mazzeo, \emph{Hodge cohomology of gravitational
instantons}, Duke Math. J. 122 (2004), no. 3, 485--548.}
\bibitem{LM85} R. B. Lockhart and R. C. McOwen, \emph{Elliptic differential operators on noncompact
manifolds}, Ann. Sc. Norm. Super. Pisa Cl. Sci. (4) 12 (1985), no. 3, 409--447.
\bibitem{Min09} \nv{V. Minerbe, \emph{Weighted Sobolev inequalities and Ricci flat manifolds},
Geom. Funct. Anal. 18 (2009), no. 5, 1696--1749.}
\bibitem{ONe83} B. O'Neill, \emph{Semi-Riemannian Geometry. With Applications to Relativity},
Pure and Applied Mathematics 103, Academic Press, New York, 1983.
\bibitem{Ozu26} T. Ozuch, \emph{Stability of gravitational instantons with a bounded Killing
vector field}, arXiv:2608.31008 (2026).
\bibitem{Pre00} \nv{T. Prestidge, \emph{Dynamic and thermodynamic stability and negative modes in
Schwarzschild--anti-de Sitter}, Phys. Rev. D 61 (2000), 084002.}
\bibitem{Rea01} \nv{H. S. Reall, \emph{Classical and thermodynamic stability of black branes},
Phys. Rev. D 64 (2001), 044005.}
\end{thebibliography}
\end{document}